\documentclass[10pt,letter]{amsart}
\usepackage[utf8]{inputenc}
\usepackage{amsmath, calligra, mathrsfs} 
\usepackage{enumerate, color, hyperref}
\usepackage{bbm}
\usepackage[all]{xy}
\usepackage{amsfonts} 
\usepackage{amssymb} 
\usepackage{graphics} 
\usepackage{amscd} 
\usepackage{enumitem} 
\usepackage{todonotes} 

\usepackage[OT2,T1]{fontenc}

\DeclareSymbolFont{cyrletters}{OT2}{wncyr}{m}{n}
\DeclareMathSymbol{\Sha}{\mathalpha}{cyrletters}{"58}
\newcommand{\mint}{\times\!\!\!\!\!\!\!\hspace{.097em}\int}
\newcommand{\modsymb}[2]{\left[\frac{#1}{#2}\right]} 

\makeatletter 
\newcommand\figcaption{\def\@captype{figure}\caption} 
\newcommand\tabcaption{\def\@captype{table}\caption} 
\makeatother

\usepackage[left=1in, right=1in, top=1in, bottom=1in]{geometry}

\newtheorem{thm}{Theorem}[section]
\newtheorem{prop}[thm]{Proposition}

\newtheorem{lem}[thm]{Lemma}
\newtheorem{con}[thm]{Conjecture}

\theoremstyle{definition}
\newtheorem{rmk}[thm]{Remark}
\newtheorem{assumption}[thm]{Assumption}
\newtheorem{expectation}[thm]{Expectation}

\DeclareMathOperator{\hP}{\mathbb{P}}
\DeclareMathOperator{\ord}{ord}

\DeclareMathOperator{\bC}{\mathbb{C}}
\DeclareMathOperator{\C}{\mathbb{C}}

\DeclareMathOperator{\bN}{\mathbb{N}}
\DeclareMathOperator{\N}{\mathbb{N}}

\DeclareMathOperator{\bP}{\mathbb{P}}
\DeclareMathOperator{\bQ}{\mathbb{Q}}
\DeclareMathOperator{\Q}{\mathbb{Q}}
\DeclareMathOperator{\bR}{\mathbb{R}}

\DeclareMathOperator{\bZ}{\mathbb{Z}}
\DeclareMathOperator{\Z}{\mathbb{Z}}

\newcommand{\cE}{\mathcal{E}}
\newcommand{\cF}{\mathcal{F}}

\newcommand{\cH}{\mathcal{H}}

\newcommand{\cL}{\mathcal{L}}

\newcommand{\cT}{\mathcal{T}}

\newcommand{\cV}{\mathcal{V}}

\title{On periods of Elliptic curves}

\author{Daniel Barrera Salazar}
\address{Universidad de Santiago de Chile, Alameda 3363, Santiago, Chile.}
\email{daniel.barrera.s@usach.com}

\author{Juan-Pablo Llerena-C\'ordova}
\address{Department of Mathematics and Statistics, University of Ottawa, 150 Louis-Pasteur Pvt, Ottawa, ON, Canada K1N 6N5}
\email{jller032@uottawa.ca}

\begin{document}
\begin{abstract}
Let $E$ be an elliptic curve over $\Q$ having split multiplicative reduction at a prime number $p$. We describe the tame part of the $\cL$-invariant of $E$ at $p$ in terms of automorphic $p$-adic periods introduced in the work of Darmon. More precisely, we prove an equality of refined $\cL$-invariants using twisted versions of refined exceptional zero conjectures. When the conductor of the elliptic curve is exactly $p$ and the automorphic period is attached to an optimal embedding of conductor $1$ then we prove this equality unconditionally by using the work of de-Shalit. 

\end{abstract}

\maketitle


\section{Introduction}
In the 1980s, Mazur, Tate, and Teitelbaum formulated $p$-adic analogs of the Birch and Swinnerton-Dyer (BSD) conjectures in \cite{mazur-tate-teitelbaum}. 
In the same decade, a different variation of the BSD conjectures appeared; \emph{refined} analogs of the BSD conjectures were stated by Mazur and Tate in \cite{MT87}. In these two versions a new phenomenon was observed: the \emph{existence of exceptional zeros}. On the $p$-adic side, \emph{the exceptional zero conjecture} was proven by Greenberg and Stevens in \cite{greenberg-stevens} using $p$-adic deformation. Surprisingly, in the refined setting, the corresponding conjecture was partially proven by de-Shalit in \cite{deShalit} following the same $p$-adic procedure that Greenberg and Stevens used in \cite{greenberg-stevens}. The present work is an attempt to pursue the transposition of techniques used to study elliptic curves from the $p$-adic setting to the refined setting.  

Let $E$ be an elliptic curve over $\Q$ having split multiplicative reduction at a prime number $p$. By the work of Tate \cite[Theorem 5]{Tat74}, we know that there exists a unique $q_E \in \Q_p^\times$ such that $\ord_p(q_E) > 0$ and $\C_p^{\times}/q_E^{\Z} \rightarrow E(\C_p)$ as rigid analytic spaces, where $\ord_p$ is the $p$-adic valuation normalized by $\ord_p(p) = 1$. The $p$-adic number $q_E$,  called the \textit{$p$-adic Tate period} attached to $E$, is the main object of interest in this article. 

Using the natural decomposition
\begin{equation}\label{period_decomposition}
	\Q_p^\times \cong p^{\bZ} \times \mu_{p-1} \times (1 + p\bZ_p),
\end{equation}
we can study the $p$-adic Tate period by its projection onto each factor of the decomposition of Equation \eqref{period_decomposition}. On the one hand, the projection to the first component of the decomposition of Equation \eqref{period_decomposition}, \textit{i.e.}, the $p$-adic valuation of $q_E$, is well-known to satisfy $\ord_p(q_E)= - \ord_p(j(E))$ (see \cite[Chapter II, \S1]{mazur-tate-teitelbaum}). On the other hand, the projection of $q_E$ into $1+ p\Z_p$ is the subject of different works, such as \cite{greenberg-stevens} and \cite{Dar01} for example. More precisely, in \cite{Dar01} the author attached an alternative $p$-adic period $I_{\Psi}\in \Q_p^{\times}$ to the elliptic curve $E$, using automorphic methods, which depends on a certain measure on $\bP^1(\Q_p)$ attached to $E$ and an optimal embedding $\Psi:\Q \times \Q \rightarrow M_2(\Q)$ (see Section \S\ref{s: automorphic periods} for the construction of $I_\Psi$). Furthermore, the author proved that these two periods are related 
\begin{thm}[{\cite[Theorem 1]{Dar01}}]\label{Result Darmon}
    We have the following equality
    \begin{equation}\label{e:formula de darmon}
    \log_p(I_{\Psi}) = \frac{\log_p(q_E)}{\ord_p(q_E)} \ord_p(I_{\Psi}),
    \end{equation}
\end{thm}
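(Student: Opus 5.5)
Our plan follows Darmon's strategy, which reduces the identity \eqref{e:formula de darmon} to the Greenberg--Stevens theorem through $p$-adic families. The quantity $I_\Psi$ will be written as a multiplicative integral
\[
I_\Psi=\mint_{\bP^1(\Q_p)} \frac{x-\tau_1}{x-\tau_2}\, d\mu_E(x),
\]
where $\tau_1,\tau_2\in\Q_p$ are the two fixed points of $\Psi(\Q^\times\times\Q^\times)$ acting on $\bP^1(\Q_p)$. The measure $\mu_E$ is the $\Z$-valued measure obtained from the harmonic cocycle on the Bruhat--Tits tree attached to the newform $f_E$; it is invariant under the relevant $S$-arithmetic group $\Gamma\subset \mathrm{SL}_2(\Z[1/p])$.

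First we split $I_\Psi$ using the decomposition \eqref{period_decomposition}. Then $\ord_p(I_\Psi)$ is the additive integral of $\ord_p\bigl(\frac{x-\tau_1}{x-\tau_2}\bigr)$ against $\mu_E$, and $\log_p(I_\Psi)$ is the corresponding integral of $\log_p$. The claim is therefore equivalent to
\[
\int \log_p\Bigl(\tfrac{x-\tau_1}{x-\tau_2}\Bigr)\, d\mu_E=\cL_p(E)\int \ord_p\Bigl(\tfrac{x-\tau_1}{x-\tau_2}\Bigr)\, d\mu_E,
\qquad \cL_p(E)=\frac{\log_p q_E}{\ord_p q_E}.
\]
We interpret both sides cohomologically. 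The two integrals arise by cup product of the class $[\mu_E]\in H^1(\Gamma,\mathcal M)$ with the two homomorphisms $\log_p,\ord_p:\Q_p^\times\to\Q_p$, evaluated on a class determined by $\Psi$. The unit group of the order is $\{\pm1\}\times p^{\Z}$, so $\Psi(p,1)$ and its stabilised cycle give a class in $H_1(\Gamma,\text{Div}^0)$. Each side is then a value of a linear functional obtained by pairing $\lambda\circ[\mu_E]$, for $\lambda\in\{\log_p,\ord_p\}$, with that cycle. It therefore suffices to prove the equality of classes
\[
\log_p\text{-part of }[\mu_E]=\cL_p(E)\cdot(\ord_p\text{-part of }[\mu_E])
\]
in the $f_E$-isotypic part of $H^1(\Gamma,\mathcal H(\Q_p))$, the module of harmonic functions or their distributions. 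Since the $f_E$-eigenspace is one-dimensional for each sign, this is an equality of two scalars.

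To compute the scalar we use the Hida family $f_\infty$ through $f_E$ at weight $2$, with $U_p$-eigenvalue $a_p(k)$ and $a_p(2)=1$. The Greenberg--Stevens theorem gives $\cL_p(E)=-2a_p'(2)$. We lift $\mu_E$ to a family of $\Gamma$-invariant distributions $\mu_k$ on $\Q_p^2\setminus\{0\}$ of weight $k$, homogeneous with $U_p$-eigenvalue $a_p(k)$, and differentiate at $k=2$. Homogeneity converts the derivative of the $U_p$-relation into the statement that the $\log_p$-integral equals $-2a_p'(2)$ times the $\ord_p$-integral, up to a $\Gamma$-coboundary. This is the Bertolini--Darmon argument for Teitelbaum's invariant, and it also identifies Teitelbaum's invariant with the Mazur--Tate--Teitelbaum invariant. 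Finally, the coboundary pairs to zero against the cycle attached to $\Psi$, because that cycle is closed: $\tau_1,\tau_2$ are fixed by the generator $\Psi(p,1)$.

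The main obstacle is building the family $\mu_k$ and controlling the differentiation. Concretely, we must verify that $\Gamma$-cohomology in the $f_E$-part is free over the Hida–Iwasawa algebra near $k=2$, so that the lift exists and the derivative is well defined. We must also match the sign and normalisation between $\log_p$ on $\Q_p^2$-homogeneous functions and $\cL_p(E)$. Our first attempt at this will be Stevens' control theorem for overconvergent modular symbols: the family lifts the modular symbol of $f_E$, and its specialisation at weight $2$ via $\Q_p^2\setminus\{0\}\to\bP^1(\Q_p)$ recovers $\mu_E$.
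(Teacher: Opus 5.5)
Your strategy is sound in outline, but it is a genuinely different proof from the one the paper relies on.

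\textbf{The paper's route.} The paper takes Theorem~\ref{Result Darmon} from Darmon, and Sections~\ref{s: Formulae}--\ref{Periods and the Exceptional zero phenomena} transpose his argument. Evaluating the multiplicative integral on the fundamental domain $\cF_\Psi$ gives $\ord_p(I_\Psi)=\sum_{a\in J}\modsymb{a}{c}$. It also expresses $\log_p(I_\Psi)$ as a limit of sums of $\log_p$ against the modular symbols $\modsymb{a}{p^nc}$. After twisting by characters $\chi$ of $(\Z/c\Z)^\times$ with $\chi(p)=1$, these become essentially special values of twisted $L$-functions and first derivatives of twisted Mazur--Tate--Teitelbaum $p$-adic $L$-functions. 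The exceptional zero theorem of Greenberg--Stevens, for such twists, relates the two by the factor $\cL_p(E)$. Orthogonality over $(\Z/c\Z)^\times/\langle p\rangle$ then isolates the sum over $J$.

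\textbf{Your route.} You instead prove a single identity of cohomology classes, $\kappa_{\log_p}\equiv -2a_p'(2)\,\kappa_{\ord_p}$ modulo coboundaries, where $\kappa_\lambda(\gamma)(\{x\to y\})=\lambda\bigl(\mint\frac{t-\gamma z}{t-z}\,d\mu\{x\to y\}(t)\bigr)$. You get it by the Bertolini--Darmon differentiation of a Hida family, and then you need only the Galois-theoretic half of Greenberg--Stevens, $\cL_p(E)=-2a_p'(2)$. The family computation yields this identity directly, so the one-dimensionality of the eigenspace is not needed.

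\textbf{Comparison.} Your route is uniform in $\Psi$: any conductor $c$, no twisted $L$-functions, no character sums, and no division by $|(\Z/c\Z)^\times/\langle p\rangle|$. The price is the $\Lambda$-adic input: existence of the lift of the modular symbol of $f_E$, and control of the derivative at weight $2$. The paper's route is the right one for its purpose, because every step has a mod $\ell^m$ analogue. There $\lambda_R,v_R$ replace $\log_p,\ord_p$, and de Shalit's theorem or Conjecture~\ref{conjetura twisteada} replaces Greenberg--Stevens. A weight-derivative $a_p'(2)$ has no immediate counterpart for $R$-valued invariants.

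\textbf{Corrections to your setup.} First, for this split embedding the fixed points are $x_\Psi,y_\Psi\in\bP^1(\Q)$; for $\Psi_\nu$ they are $\infty$ and $-\nu/c$. So $\mint_{\bP^1(\Q_p)}\frac{t-\tau_1}{t-\tau_2}\,d\mu$ has its zero and pole on the support of the measure, and it diverges. The integrand $t+\nu/c$ may only be integrated over $\cF_\Psi$, as in Darmon's Proposition~2.7 quoted in the proof of Proposition~\ref{logPsi-nueva}. Use the actual definition $I_\Psi=\mint\frac{t-\gamma_\Psi z}{t-z}\,d\mu\{x_\Psi\to y_\Psi\}(t)$ with $z\in\mathcal{H}_p$. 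Then $\lambda(I_\Psi)=\kappa_\lambda(\gamma_\Psi)(\{x_\Psi\to y_\Psi\})$, which is exactly the pairing with the closed cycle you use, and coboundaries vanish on it since $\gamma_\Psi$ fixes both endpoints.

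Second, $\Psi(p,1)$ has determinant $p$ and is not in $\Gamma$. The cycle must use $\gamma_\Psi$, the image of $\Psi(p^t,p^{-t})$ with $t$ the order of $p^2$ in $(\Z/c\Z)^\times$.

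Third, note where the family really enters. The function $\log_p(x-zy)$ is not invariant under $\Z_p^\times$-scaling, so the $\log_p$-cocycle must be rewritten using the weight-$2$ member of the family as a measure on $\mathbb{X}=\Z_p^2\setminus p\Z_p^2$, not merely its pushforward $\mu_E$. Comparing its integrals over $\mathbb{X}$ and $\gamma^{-1}\mathbb{X}$, using homogeneity under the scalar $p$ (which acts through $U_p^2$), is what produces the $\ord_p$-cocycle with the factor $-2a_p'(2)$.
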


where $\log_p$ is the $p$-adic logarithm with $\log_p(p) = 0$. Moreover, $I_\Psi$ is conjecturally a power of $q_E$, see Conjecture 3 in \cite{Dar01}.

Equation \eqref{e:formula de darmon} does not detect the information regarding the projection of $q_E$ to $\mu_{p-1}$, the group of $(p-1)$-th roots of unity. 
Our main goal is to prove an analogous formula to Equation \eqref{e:formula de darmon} by replacing $\log_p$ and $\ord_p$ by functions that capture this information.

Fix $\ell$ a prime number dividing $p-1$ and let $\ell^{m}$ be the maximal power of $\ell$ dividing $p-1$. We denote by $R= \Z/\ell^m\Z$ and fix a logarithm $\mathrm{log}: (\Z/p\Z)^{\times}\rightarrow R$ i.e. a surjective group homomorphism. We denote by $v_R, \lambda_R: \Q_p^{\times}\rightarrow R$ the homomorphisms defined as follows: if $q\in \Q_p^{\times}$, then we let $v_R(q)= \mathrm{ord}_p(q) \ (\mathrm{mod} \ \ell^m)$ and $\lambda_R(q)= \mathrm{log}(\tilde{q} \ (\mathrm{mod} \ p))$ where $\tilde{q}= q\cdot p^{-\mathrm{ord}_p(q)}$. In particular, we obtain an identification 
$\Q_p^{\times}\otimes_{\Z}R \cong R^2, \ \ q\mapsto (v_R(q), \lambda_R(q))$. In this article, we prove the following.
\begin{thm}\label{theorem_1} Assume that $E$ is an elliptic curve defined over $\Q$ has prime conductor $p$. Let $\ell> 3$ be a prime number coprime to the modular degree of $E$. If $\Psi$ is an optimal embedding of conductor $1$, then:
\begin{equation}\label{eqtheorem_1}
\lambda_{R}(I_{\Psi})= \frac{\lambda_{R}(q_E)}{v_R(q_E)}\cdot v_R(I_{\Psi}).
\end{equation}
\end{thm}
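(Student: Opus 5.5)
We follow the refined analogue of Darmon's strategy in \cite{Dar01}. In the $p$-adic setting, Theorem \ref{Result Darmon} comes from two expressions for derivatives of $p$-adic $L$-functions. One expresses the first derivative of the Mazur--Tate--Teitelbaum $L$-function via the $\cL$-invariant $\log_p(q_E)/\ord_p(q_E)$ (Greenberg--Stevens). The other expresses the derivative of an anticyclotomic/split $p$-adic $L$-function attached to $\Psi$ via $\log_p(I_\Psi)$ and $\ord_p(I_\Psi)$. Here we replace $\log_p$ and $\ord_p$ by the pair $(\lambda_R,v_R)$. We replace derivatives of $p$-adic $L$-functions by the images of Mazur--Tate elements $\theta\in R[\mathrm{Gal}]$ in $I/I^2$, where $I$ is the augmentation ideal.

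\textbf{Step 1: the refined period side.} Attach to $E$ its $R$-valued harmonic cocycle, i.e.\ the measure $\mu_E$ on $\bP^1(\Q_p)$ reduced modulo $\ell^m$. This requires $\ell$ coprime to the modular degree, so that the modular symbol normalised by the optimal parametrisation is integral and nonzero mod $\ell$. Since $\Psi$ has conductor $1$, its fixed points $\tau_1,\tau_2\in\bP^1(\Q)$ are well defined. The multiplicative integral $I_\Psi=\mint (t-\tau_1)/(t-\tau_2)\,d\mu_E$ then gives $\lambda_R(I_\Psi)$ and $v_R(I_\Psi)$ as integrals of $\lambda_R$ and $v_R$ of $(t-\tau_1)/(t-\tau_2)$ against $\mu_E \bmod \ell^m$. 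We would then write these as the coefficients of a Mazur--Tate type element. Concretely, the element $\theta_\Psi=\sum_a \mu_E(U_a)\,[a]\in R[(\Z/p\Z)^\times\times p^{\Z}/p^{\ell^m\Z}]$ lies in the augmentation ideal, and its image in $I/I^2\cong R^2$ is $(\lambda_R(I_\Psi),v_R(I_\Psi))$ up to a common unit. This is a direct unwinding of the definition, parallel to Darmon's computation.

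\textbf{Step 2: the Tate-period side.} Next we identify the same element, or the corresponding twisted Mazur--Tate element, with the cyclotomic refined element $\theta_E$ of level $p$ in $R[(\Z/p\Z)^\times]$, via the action of the Hecke operator $U_p=1$ (split multiplicative reduction). The vanishing at the exceptional zero lets us read off a relation of the form $\theta = c\,(v_R(I_\Psi)\,\lambda - \lambda_R(I_\Psi)\,v)$. The key input is de Shalit's theorem \cite{deShalit}: for prime conductor $p$ and $\ell>3$ not dividing the modular degree, the refined exceptional zero conjecture holds. That is, the image of $\theta_E$ in $I/I^2$ equals $\frac{\lambda_R(q_E)}{v_R(q_E)}$ times an explicit algebraic $L$-value or modular-symbol term.

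\textbf{Step 3: conclude.} Combining the two expressions, both sides compute the same class in $I/I^2$. Comparing coefficients gives $\lambda_R(I_\Psi)\,v_R(q_E)=\lambda_R(q_E)\,v_R(I_\Psi)$, which is \eqref{eqtheorem_1}. This uses that $v_R(q_E)=-\ord_p(j(E))$ is invertible in $R$. If it were not, the statement would have to be read accordingly; presumably de Shalit's hypotheses or Ribet's level-lowering exclusion of $\ell\mid \ord_p(\Delta)$ ensures invertibility, and we would check this.

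\textbf{Main obstacle.} We expect Step 2 to be the hardest: matching Darmon's period integral for a conductor-$1$ embedding with exactly the Mazur--Tate element to which de Shalit's result applies, including normalisations and units. We would first try to reduce $\Psi$, up to $\Gamma_0(p)$-conjugation, to the standard split embedding with fixed points $0$ and $\infty$. Then $I_\Psi=\mint_{\Z_p^\times} t\,d\mu_E$ up to the $p^{\Z}$ part, which is literally the refined derivative of the Mazur--Tate element.
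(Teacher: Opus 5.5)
Your outline follows the paper's route: Propositions~\ref{ordPsi} and~\ref{logPsi-nueva} at $c=1$, then de~Shalit's theorem, then invertibility of $v_R(q_E)$ in $R$ from de~Shalit's \S6.3. That last step, not integrality of the symbols, is where the modular-degree hypothesis is used; integrality is arranged by the factor $D$. Carried out, your Steps~1--2 give $\lambda_R(I_\Psi)=\sum_{a\in(\Z/p\Z)^\times}\lambda_R(a)\modsymb{a}{p}$ via the distribution relation from $U_p=1$. De~Shalit's theorem is the congruence
\[
v_R(q_E)\sum_{a\in(\Z/p\Z)^\times}\lambda_R(a)\modsymb{a}{p}=\lambda_R(q_E)\modsymb{0}{1}\quad\text{in } R .
\]
The argument does not close as written, because nothing in your proposal identifies $v_R(I_\Psi)$ with the term $\modsymb{0}{1}$ on the right. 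Once you pass from $\theta_\Psi$ to $\theta_E\in R[(\Z/p\Z)^\times]$, only the $\lambda_R$-coordinate survives. So ``comparing coefficients'' in Step~3 has nothing to compare $v_R(I_\Psi)$ with, and the relation $\theta=c\,(v_R(I_\Psi)\lambda-\lambda_R(I_\Psi)v)$ you propose to read off is not meaningful there. The missing input is $v_R(I_\Psi)\equiv\modsymb{0}{1}\pmod{\ell^m}$. The paper gets this from Darmon's formula $\ord_p(I_{\Psi_\nu})=\sum_{a\in J}\modsymb{a}{c}$ (Proposition~\ref{ordPsi}); for $c=1$ one has $J=\{0\}$.

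This is exactly the information your shortcut ``up to the $p^{\Z}$ part'' discards. The expression $\mint(t-\tau_1)/(t-\tau_2)\,d\mu_E$ is not the definition of $I_\Psi$, which uses $(t-\gamma_\Psi z)/(t-z)$ with $z\in\mathcal{H}_p$. Its integrand has a zero and a pole at the rational fixed points, and small balls around these have $\mu\{x_\Psi\to y_\Psi\}$-mass $\pm\modsymb{0}{1}$. So the integral diverges unless $\modsymb{0}{1}=0$. The usable form is Darmon's fundamental-domain formula. After reducing to $\Psi_\nu$ (Remark~\ref{simplify_rmk}), it writes $I_\Psi$ as a power of $p$ times $\mint_{\cF_\Psi}(t+\nu)\,d\mu$. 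For $c=1$ one has $s=2$, since $\gamma_\Psi$ multiplies $t+\nu$ by $p^{\pm 2}$. Hence $\cF_\Psi$ is the union of the two annuli $\ord_p(t+\nu)\in\{0,1\}$, not $\Z_p^\times$. Each annulus has total mass $\pm\sum_{a\in(\Z/p\Z)^\times}\modsymb{a}{p}=0$ (the exceptional zero), so the integral over $\cF_\Psi$ is a $p$-adic unit. All of $v_R(I_\Psi)$ therefore sits in the power of $p$ you set aside; it is invisible to $\lambda_R$ only because $\lambda_R(p)=0$. Add that computation, or quote Darmon's Proposition~2.13, and your argument becomes the proof of Theorem~\ref{teorema_fuerte}(b).
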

As explained in \cite[\S 6.3]{deShalit}, the hypothesis on the degree of the parametrization implies that $v_R(q_E) \in R^{\times}$. Thus, Equation \eqref{eqtheorem_1} is well-defined.  To prove our theorem, we use a refined version of the strategy carried out in the proof of Theorem \ref{Result Darmon}. The strategy in \cite{Dar01} used, in a crucial step, the $p$-adic exceptional zero conjecture proved by Greenberg-Stevens in \cite{greenberg-stevens}. As already mentioned in \cite{deShalit}, de-Shalit proved a refined analog of the Greenberg-Stevens result and, under certain hypotheses, the so-called \emph{refined conjecture} stated by Mazur and Tate in \cite[p.712]{MT87}. To prove our theorem, we follow the strategy carried out by Darmon in \cite[\S2]{Dar01} in our refined situation and we use the main result of de-Shalit.

To prove the theorem above, we use the strong assumptions that the conductor of $E$ is $p$ and that the conductor of $\Psi$ is $1$. Nevertheless, several calculations are valid when $E$ has arbitrary conductor and the conductor of $\Psi$ is coprime to the conductor of $E$. In Theorem \ref{teorema_fuerte} we assume Conjecture \ref{conjetura twisteada}, which is a generalization of the \emph{refined conjecture} stated by Mazur and Tate, and a refined version of \cite[ \S 13 Conjecture 1]{mazur-tate-teitelbaum}, this allows us to remove the hypothesis that $E$ has prime conductor and that the optimal embedding has conductor $1$ in Theorem \ref{theorem_1}. We hope that this conjecture can be studied by generalizing de-Shalit's approach.
\begin{rmk}
    Recently in  \cite{BH25} Bullach and Honnor announced a proof of Conjecture 6 of \cite{MT87} (which implies Conjecture \ref{conjetura twisteada}, see Lemma \ref{lem:5.5}) in more generality than in \cite{deShalit}. Their approach is different from the approach of de-Shalit. Indeed, Bullach and Honnor use a relation between Selmer complexes and Euler systems developed by Bullach and Burns in \cite{BP25}. Applying the result of Bullach and Honnor, we could replace the assumption that $E$ has prime conductor and that $\ell$ is coprime to the modular degree, with the assumption that $E$ is semistable, does not have CM, and $\ell \geq 11$ (see \cite[Corollary 1.9]{BH25}).
\end{rmk}
In Section \S\ref{s: Modular Symbols} and Section \S\ref{s: automorphic periods} we summarize the necessary notation from \cite{Dar01} and define the period $I_{\Psi}$. In Section \S\ref{s: Formulae}, we transpose the computations from \cite{Dar01} to obtain formulae for $\lambda_R(I_{\Psi})$ and $v_R(I_{\Psi})$ in terms of modular symbols. In Section \S\ref{A refined version of exceptional zero conjectures}, we state twisted versions of the conjectures stated by Mazur and Tate. In \S\ref{Periods and the Exceptional zero phenomena}, we prove our main theorem. Finally, in Section \S\ref{s: j-invariant}, we present numerical support for an expectation on the $p$-adic valuation of the $j$-invariant which is needed to define the refined $\cL$-invariant in general.

\subsection*{Acknowledgements} We thank H. Darmon for his support and motivation. We also thank V. Rotger and M. Masdeu for their interest in this work. Finally, we would like to thank A. Lei for feedback on an earlier version of this article. This article started as part of JPLC master's thesis at the University of Chile and funded by ANID scholarship Mag\'ister Nacional 2021 N 22221372. DBS was supported by ECOS230025 and  ANID FONDECYT grant 1241702.


\section{Modular Symbols}\label{s: Modular Symbols}

Let $E/\bQ$ be an elliptic curve over the field of rational numbers, and let $N\in \bN$ be the conductor of $E$. Unless stated otherwise, we will make the following assumption for the rest of the article.
\begin{assumption}\label{Q_isogeny}
	The elliptic curve $E$ is the optimal elliptic curve\footnote{Alternatively referred to as the strong Weil curve.} in its $\Q$-isogeny class.
\end{assumption}
We will prove that this assumption is not essential for Theorem \ref{theorem_1} (see Lemma \ref{lemma_assumption}). Nevertheless, this assumption simplifies the proof of Theorem \ref{theorem_1}. Thus, we will prove Theorem \ref{theorem_1} under this assumption; the general case will be a consequence of Lemma \ref{lemma_assumption}).

\begin{rmk}
	Note that in \cite{Dar01}, Darmon makes the stronger assumption that $E$ is unique in its $\bQ$-isogeny class (see \cite[Assumption 2]{Dar01}). By the Modularity Theorem, we know that every $\bQ$-isogeny class contains an optimal curve. So, Assumption \ref{Q_isogeny} is weaker than assuming that $E$ is unique in its $\Q$-isogeny class. 
    
    The stronger assumption that $E$ is unique in its $\bQ$-isogeny class is essential in \cite{Dar01}, as some of the conjectures appear to not hold if $E$ is not unique in its $\bQ$-isogeny class (see Conjecture $5$ in \cite{Dar01}). However, as pointed out by Darmon, the assumption that $E$ is unique in its $\bQ$-isogeny class is not essential for the proof of Theorem \ref{Result Darmon} (see the discussion following Conjecture $5$ of \cite{Dar01}), and since we are only interested in a refined version of such theorem, Assumption \ref{Q_isogeny} is sufficient.
\end{rmk}
Denote by $\omega_E$ a N\'eron differential of $E$ and by $\Lambda_E := \left\{\int_{[\gamma]} \omega_E; \ \  [\gamma]\in H_1(E(\bC), \bZ)\right\}$ its N\'eron lattice. Also, let $\Omega_E^{\pm}\in \bR$ be the unique positive real numbers such that one of the following conditions holds:
\begin{itemize}
	\item $\Lambda_E = \Omega_E^+ \cdot \bZ + \Omega_E^- \bZ i$ \hspace{7pt}(Rectangular case).
	\item $\Lambda_E \subseteq \Omega_E^+ \cdot \bZ + \Omega_E^- \bZ i$ and $\Lambda_E$ consists of the elements of the form $a\Omega_E^+ + b \Omega_E^- i$ where $a\equiv b \pmod{2}$\hspace{7pt} (Non-rectangular case).
\end{itemize}
Note that $\Omega_E^+ = \frac{1}{2}\int_{E(\bR)} |\omega_E|$. Thus, $\Omega_E^+$ is equal to the least positive real period of $\Lambda_E$ in the rectangular case, and $\Omega_E^+$ is equal to half the least positive real period of $\Lambda_E$ in the non-rectangular case.

Denote by $\cH$ the upper-half plane, and let $\cH^* = \cH \cup \bP^1(\bQ)$ denote the extended upper-half plane. By the Modularity Theorem, we know that there exists a complex parametrization $\phi: \Gamma_0(N)\setminus \cH^* \rightarrow E(\bC)$, such that
\[
	\phi^*(\omega_E) = c_E2\pi i f_E(z) dz,
\]
where $f_E\in S_2(\Gamma_0(N))$ is a newform and $c_E\in \Z$ is the Manin constant, which is well-defined up to sign (for a survey regarding the Manin constant see \cite{ARS06}).

By the Theorem of Manin-Drinfeld (\cite{Drinfeld}, \cite{Manin}) there exist rational numbers $\modsymb{a}{b}^{\pm}_{\mathrm{raw}}\in \bQ$ such that
\[
    2\pi i \int_{i\infty}^{-\frac{a}{b}} f_E(z) dz= \modsymb{a}{b}^+_{\mathrm{raw}} \cdot \Omega_E^+ + \modsymb{a}{b}^-_{\mathrm{raw}} \cdot \Omega_E^-i.
\]
Furthermore, the Theorem of Manin-Drinfeld guarantees that there exists a $D\in \bN$ such that, for all $\frac{a}{b}\in \bQ$ we have that
\begin{equation}\label{modsymb}
	\modsymb{a}{b}^{\pm} := D\cdot\modsymb{a}{b}^{\pm}_{\text{raw}}\in \bZ,
\end{equation}

that is, the denominators of $\modsymb{a}{b}^{\pm}$ are bounded. Now, fix $D$ to be the smallest natural number that satisfies Equation \eqref{modsymb} for all $\frac{a}{b}\in \bQ$. We refer to the values $\modsymb{a}{b}^{\pm}$ as the ``$+$'' or ``$-$'' modular symbols of $E$ at $\frac{a}{b}\in \bQ$. The value $D$ is of great interest, as it would allow for better optimizations when calculating modular symbols (See \cite[Section 2]{Wut18}). Under Assumption \ref{Q_isogeny} and assuming that $N$ is square-free, it is known that $\frac{1}{D}\in \bZ\left[\frac{1}{c_E}, \frac{1}{2}, \frac{1}{\#E(\bQ)_{tor}}\right]$ (See \cite[Proposition 1]{Wut18}). For the general case, see \cite[Theorem 1]{Drinfeld}. Nonetheless, if we do not consider the smallest natural number $D$ that satisfies Equation \eqref{modsymb} (\textit{i.e.}, we consider a larger value $D$), then Theorem \ref{theorem_1} still holds.

Since we are only interested in the ``$+$'' modular symbol, we will denote the ``$+$'' modular symbol attached to $E$ as $\modsymb{a}{b} :=  \modsymb{a}{b}^+\in \Z$ (\textit{cf.} \cite[\S 1.2]{Dar01}). The values $\modsymb{a}{b}$ will be key to defining a measure on $\mathbb{P}_1(\mathbb{Q}_p)$.

\begin{rmk}\label{remark_multiplication}
	This slight variation on the definition of modular symbols can be thought of as multiplying the modular symbols as defined in \cite[\S1.2]{Dar01} by a integer to guarantee that the modular symbols are integers. The inclusion of the variable $D$ in Equation \eqref{modsymb} is such that this ``scaling process'' is independent of the choice of $\frac{a}{b}\in \Q$. See Remark \ref{importantremark} for the consequence of considering a different value $D$ in Equation \eqref{modsymb}.
\end{rmk}

\begin{rmk}
	This definition of the ``$+$'' symbol differs from the convention used in \cite{Dar01}, where the integral of Equation \eqref{modsymb} is multiplied by $2\pi i c_E$ rather than $2\pi i$ (we are using the convention used in \cite[\S 6.1.c]{deShalit}). The reason for the convention in \cite{Dar01} is to guarantee that the modular symbols are integers, which is necessary for the construction of the measure on $\mathbb{P}_1(\mathbb{Q}_p)$. However, in our case, we ``scale'' the modular symbols to guarantee that they are integers.
    
	Under Assumption \ref{Q_isogeny}, it is expected that $c_E = 1$ (See \cite{ARS06}). Therefore, conjecturally, the convention made in \cite{Dar01} and Equation \eqref{modsymb} are expected to be equal. Nevertheless, if $c_E \ne 1$, one can multiply Equation \eqref{modsymb} by $c_E$ and it will not affect the proof of Theorem \ref{theorem_1}. But it may trivialize Theorem \ref{theorem_1} (see Remark \ref{importantremark}).
\end{rmk}

\begin{rmk}\label{prop_denominadores} In fact, results from Mazur and Wuthrich give information about the number $D$. Suppose that $E$ has multiplicative reduction at $p$ and let $c\in \bN$ be coprime to the conductor of $E$ then we have:
	\begin{enumerate}
		\item If $\ell > 7$, then the denominator of $\modsymb{a}{p^nc}_{\text{raw}}$ is coprime to $\ell$ for all $a\in (\bZ/p^nc\bZ)^\times$.
		\item If $E$ has prime conductor $p\ne 11$ and $\ell > 3$, then the denominator of $\modsymb{a}{p^nc}_{\text{raw}}$ is coprime to $\ell$ for all $a\in (\bZ/p^nc\bZ)^\times.$
	\end{enumerate}

Indeed, since $E$ has multiplicative reduction at $p$ and $c$ is coprime to the conductor of $E$, then by \cite[Proposition 1]{Wut18} we have that $\modsymb{a}{p^nc}_{\text{raw}} \in \frac{1}{2\#E(\bQ)_{\mathrm{tor}}}\bZ$. Then (1) is a consequence of the classification of the torsion of $E(\bQ)$  in \cite[III.5, Theorem 5.1]{Mar77}. As we know that $E(\bQ)$ cannot have a point of torsion $\ell > 7$, then the denominator of $\modsymb{a}{p^nc}_{\text{raw}}$ is coprime to $\ell$.

As pointed out by Mazur \cite[III.7]{Mar77}, if $E$ has prime conductor, then it $E$ can only have a torsion point of order $\ell \geq 5$. Moreover, in \cite{Miy73} it is proven that if $E(\bQ)$ has prime conductor and has a torsion point of order $\ell = 5$, then the conductor of $E$ is $11$. Thus we obtain (2).
\end{rmk}

\section{Automorphic periods}\label{s: automorphic periods}

\subsection{Measures on $\mathbb{P}_1(\mathbb{Q}_p)$}\label{ss: Measure}
From now on, we assume that $E$ has split multiplicative reduction at  $p$, and we write $N= pM$ with $M\in \N$ and $p\nmid M$. 

Let $G := \{(\begin{smallmatrix}
		a & b\\
		c & d
	\end{smallmatrix})\in \mathrm{M}_2(\bZ[1/p]); \hspace{3pt} c \equiv 0 \pmod{M}\}$. We denote by $\Gamma \subseteq \text{PSL}_2(\mathbb{Z}[1/p])$ the image under the natural projection $\mathrm{SL}_2(\mathbb{Z}[1/p]) \twoheadrightarrow \text{PSL}_2(\mathbb{Z}[1/p])$ of the matrices $\gamma \in G$ such that $\mathrm{det}(\gamma) = 1$. We also denote by $\tilde{\Gamma}\subset \text{PGL}_2(\mathbb{Z}[1/p])$ the image under the natural map $\mathrm{GL}_2(\mathbb{Z}[1/p]) \twoheadrightarrow \mathrm{PGL}_2(\bQ_p)$ of the matrices $\gamma\in G$ such that $\mathrm{det}(\gamma) > 0$. 

We denote by $\cT$ the Bruhat-Tits tree of $\mathrm{PGL}_2(\Q_p)$. We will also denote by $\cV(\cT)$ the set of vertices of $\cT$ (equivalence classes of rank $2$ $\mathbb{Z}_p$-free modules modulo $\mathbb{Q}_p^{\times}$-homothety) and by $\cE(\cT)$ the set of ordered edges of $\cT$ (two vertices are connected if there exist representatives such that one contains the other with index $p$). Furthermore, let $v_*\in \cV(\cT)$ be the vertex whose equivalence class contains $\Z_p^2$, and let $e_*$ be the edge with source $v_*$ and with target the vertex whose equivalence class contains $\Z_p\times p\Z_p$.

We can identify the elements of $\cE(\cT)$ with basic compact open subsets of $\hP_1(\Q_p)$ as follows: Set $U(e_*) := \Z_p\subseteq \hP_1(\Q_p)$. Now, for an arbitrary edge $e\in\cE(\cT)$, let $\gamma\in \mathrm{GL}_2(\Q_p)$ be such that $e = \gamma e_*$, and set 
\[
	U(e) := \gamma(U(e_*)) = \{x\in \hP(\Q_p);\hspace{3pt} \gamma^{-1}x\in \Z_p\}.
\]
Fix $x,y\in \mathbb{P}_1(\Q_p)$. From \cite[Sec. 1.2]{Dar01} we obtain a measure $\mu\{x\rightarrow y\}$ on $\mathbb{P}_1(\Q_p)$ such that for each $e \in\cE(\cT)$, we have
\[
    \mu\{x\rightarrow y\}\left(U(e)\right) := \modsymb{c}{d} - \modsymb{a}{b}.
\]
where $\frac{a}{b}, \frac{c}{d}\in \Q$ are such that $\gamma x = -\frac{a}{b}$, $\gamma y = -\frac{c}{d}$, and $\gamma\in \tilde{\Gamma}$ is such that $\gamma e = e_*$.

\subsection{The period $I_{\Psi}$} \label{ss:automorphic period}

Let $c\in \bN$ be coprime to $N$. Also, let $\Psi: \Q\times \Q \rightarrow M_2(\Q)$ be an optimal embedding of conductor $c$, i.e., a $\Q$-algebra embedding such that
\[
\Psi^{-1}\left(\left\{\begin{pmatrix}
				a & b\\
				c & d
				\end{pmatrix}\in M_{2}\left(\bZ[1/p]\right); c \equiv 0 \pmod{M}
\right\}\right) = \{(u,v)\in \mathbb{Z}[1/p] \times \mathbb{Z}[1/p]; \hspace{3pt} u \equiv v\ (\mathrm{mod}\ c)\}.
\]
If $\nu\in\bN$ with $(\nu, c) = 1$ we have an optimal embedding $\Psi_{\nu}: \Q\times \Q \rightarrow M_2(\Q)$ defined via
\begin{equation}\label{eq:optimal embedding}
    \Psi_{\nu}(a,a) := \begin{pmatrix} a & 0\\ 0 & a \end{pmatrix} \text{ and }\Psi_{\nu}(c,0) := \begin{pmatrix} c & \nu\\ 0 & 0 \end{pmatrix}.
\end{equation}
Now, given an optimal embedding $\Psi$, denote by $\overline{\Psi}: \Q^{\times}\times \Q^{\times} \rightarrow \mathrm{PSL}_2(\Q)$ the induced group homomorphism by $\Psi$. We have that $\overline{\Psi}(\Q^{\times}\times \Q^{\times})$ acts on $\cH^*$ (via M\"obius transformations) with two fix points $x_{\Psi},y_{\Psi}\in\hP_1(\Q)$, and the group $\overline{\Psi}(\Q^{\times}\times \Q^{\times}) \cap \Gamma$ has rank $1$. We fix a generator $\gamma_{\Psi} \in \overline{\Psi}(\Q^{\times}\times \Q^{\times}) \cap \Gamma$ and we define the automorphic period by 
\[
	I_{\Psi} := \mint_{\mathbb{P}_1(\Q_p)} \left(\frac{t - \gamma_{\Psi}z}{t-z}\right) \mu\{x_{\Psi} \rightarrow y_{\Psi}\}(t) \in \C_p^{\times}.
\]
Here $z\in \cH_p$ is any fixed point, and we consider the multiplicative integral (See \cite[Equation (72)]{Dar01}).  In \cite[Lem. 2.3 and Prop. 2.7]{Dar01}, Darmon proved that $I_{\Psi}$ is independent of the choice of $z\in \cH_p$ and $I_{\Psi}\in \Q_p^{\times}$. Finally, by interchanging $x_{\Psi}$ and $y_{\Psi}$ we can assume that $x_{\Psi}$ is an attractive point and $y_{\Psi}$ is a repulsive point of $\gamma_{\Psi}$.

\section{Formulae for $\lambda_R(I_{\Psi})$ and $v_R(I_{\Psi})$}\label{s: Formulae}
From now on, we will make the following assumption on the optimal embedding $\Psi$ of conductor $c$.
\begin{assumption}\label{assumption:optimal embedding}
    The optimal embedding $\Psi$ of conductor $c$ has the form $\Psi_{\nu}$ for some $\nu\in \N$ coprime to $c$, where $\Psi_\nu$ is defined as in Equation \eqref{eq:optimal embedding}.
\end{assumption}

\begin{rmk}\label{simplify_rmk} To prove Theorem \ref{theorem_1}, it is enough prove it under Assumption \ref{assumption:optimal embedding}. Indeed, by \cite[Lemmas 2.12, 2.2 and 2.3]{Dar01} there exists a $\nu\in \N$ coprime to $c$ such that $I_{\Psi_{\nu}} = I_{\Psi}^{\pm 1}$. Moreover, the image of $\nu$ in $(\Z/c\Z)^{\times}/\left<p^2\right>$ is uniquely determined by $\Psi$. Finally, observe that Theorem \ref{theorem_1} is invariant when replacing $I_{\Psi}$ by $I_{\Psi}^{-1}$. Thus, we can focus only on the case when the automorphic period is of the form $I_{\Psi_{\nu}}$ for some $\Psi_\nu$
\end{rmk}
Fix a logarithm $\mathrm{log}: (\Z/p\Z)^{\times}\rightarrow R$ i.e., a surjective group morphism, and denote by  $v_R, \lambda_R: \Q_p^{\times}\rightarrow R$ the homomorphisms defined by $v_R(q)= \mathrm{ord}_p(q) \ (\mathrm{mod} \ \ell^m)$ and $\lambda_R(q)= \mathrm{log}(\tilde{q} \ (\mathrm{mod} \ p))$ if $q\in \Q_p^{\times}$, where $\tilde{q}= q\cdot p^{-\mathrm{ord}_p(q)}$.

A first key step in Darmon's proof of Theorem 1 of \cite{Dar01}, in the $p$-adic context, are certain formulae relating $\mathrm{ord}_p(I_{\Psi})$ and $\mathrm{log}_p(I_{\Psi})$ to modular symbols. Below, we obtain analogous formulas in our refined situation; the line of argument is similar to \cite{Dar01}, where we replace $\log_p$ and $\ord_p$ with $\lambda_R$ and $v_R$, respectively.

\begin{prop}\label{ordPsi} Let $J$ be the subset of $(\mathbb{Z}/c\mathbb{Z})^{\times}$ consisting of the cosets $a$ such that $a/\nu \equiv p^j\ (\mathrm{mod}\ c)$ for some $j \in \Z$ depending on $a$. Then
\begin{equation}\label{eq:ordPsi}
    v_R(I_{\Psi}) = \sum_{a\in J} \modsymb{a}{c} \ \ (\mathrm{mod}\ \ell^m).
\end{equation}
\end{prop}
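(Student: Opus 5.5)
We follow Darmon's computation of $\ord_p(I_\Psi)$ in \cite[\S2]{Dar01}. Since $v_R$ is $\ord_p$ reduced modulo $\ell^m$, it suffices to show that $\ord_p(I_\Psi)=\sum_{a\in J}\modsymb{a}{c}$ as an identity of integers. Reducing modulo $\ell^m$ then gives \eqref{eq:ordPsi}. The argument does need to be rerun rather than quoted, because our modular symbols are normalized by $D$ instead of $c_E$. The combinatorics are unaffected by this rescaling, since $\mu$ is defined directly from the integer-valued symbols $\modsymb{a}{b}$.

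\emph{Step 1: reduce to a signed measure count.} For a multiplicative integral $\mint f\,d\mu$ with $\mu$ integer-valued, $\ord_p$ is additive. Choosing $z$ in the affinoid reducing to the vertex $v_*$, the function $t\mapsto \ord_p\big((t-\gamma_\Psi z)/(t-z)\big)$ is locally constant on $\hP_1(\Q_p)$. Its value is determined by the position of $t$ relative to the geodesic in $\cT$ from $v_*$ to $\gamma_\Psi v_*$. Hence $\ord_p(I_\Psi)$ is a signed sum of $\mu\{x_\Psi\to y_\Psi\}(U(e))$ over the edges $e$ on the path from $v_*$ to $\gamma_\Psi v_*$. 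Equivalently, it equals $\mu\{x_\Psi\to y_\Psi\}$ evaluated on the difference of the two ends, in the sense of \cite[Lemma 2.5]{Dar01}.

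\emph{Step 2: compute this sum for $\Psi=\Psi_\nu$.} For $\Psi_\nu$ the fixed points are $x_\Psi=\infty$ and $y_\Psi=-\nu/c$, up to ordering. The generator is $\gamma_\Psi=\overline{\Psi}_\nu(p^{k},1)$ up to sign, where $k$ is the order of $p$ in $(\Z/c\Z)^\times/\{\pm1\}$ (or the analogous minimal exponent making the matrix lie in $\Gamma$). Explicitly,
\[
\gamma_\Psi=\begin{pmatrix} p^{k} & \nu(p^{k}-1)/c\\ 0&1\end{pmatrix}
\]
after normalizing the determinant. The path from $v_*$ to $\gamma_\Psi v_*$ has length $k$, and its edges $e_j$ ($0\le j<k$) are translates of $e_*$ by matrices of the form $\begin{pmatrix}p^j&*\\0&1\end{pmatrix}$. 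For each edge we pick $\gamma_j\in\tilde\Gamma$ with $\gamma_j e_j=e_*$ and transport the cusps $\infty$ and $-\nu/c$. The value $\mu(U(e_j))$ is then $\modsymb{a_j}{c}-\modsymb{0}{1}$, up to the sign convention, where $a_j\equiv \nu p^{\pm j}\pmod c$. The contributions at $\infty$ telescope or cancel, as in \cite[Prop.~2.10]{Dar01}. As $j$ ranges over $0,\dots,k-1$, the residues $\nu p^{j}$ run over $J$, up to the $\pm$ identification, which is handled by the evenness $\modsymb{-a}{c}=\modsymb{a}{c}$ of the $+$ symbol. This yields $\sum_{a\in J}\modsymb{a}{c}$.

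\emph{Main obstacle.} The delicate part is the bookkeeping in Step 2. It requires choosing the $\gamma_j$ inside $\tilde\Gamma$, which needs lower-left entry divisible by $M$ and a $\Z[1/p]$ matrix, and then showing the transported cusps are exactly $-a/c$ with $a=\nu p^j$. It also requires matching the orientation (attractive versus repulsive, the sign of $k$) so that no stray sign remains and each coset of $J$ is counted exactly once. If only a multiple or a sign discrepancy appears, we will resolve it by comparing with Darmon's integral formula and using the invariance $I_\Psi\mapsto I_\Psi^{-1}$ noted in Remark~\ref{simplify_rmk}.
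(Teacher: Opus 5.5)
Your outer reduction is the paper's proof. The paper obtains the integer identity $\ord_p(I_{\Psi_\nu})=\sum_{a\in J}\modsymb{a}{c}$ by quoting \cite[Proposition 2.13]{Dar01} and then reduces it modulo $\ell^m$. Your reason for not quoting it does not hold. The $D$-normalized symbols are Darmon's $c_E$-normalized ones multiplied by the constant $D/c_E$, so $c_E\,\mu=D\,\mu^{\mathrm{Dar}}$ and $I_\Psi^{c_E}=(I_\Psi^{\mathrm{Dar}})^{D}$. Both sides of Darmon's identity therefore scale by the same factor; equivalently, his argument uses only the $\tilde\Gamma$-invariance and harmonicity of $\mu$. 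The citation suffices, and that one line is the whole proof.

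Step 1 is fine, but the computation you substitute for the citation in Step 2 has concrete errors.

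\emph{The generator.} $\gamma_\Psi$ comes from a norm-one unit $(p^n,p^{-n})$ of the order, which requires $p^{2n}\equiv 1\pmod c$. So it acts by $t+\nu/c\mapsto p^{\pm s}(t+\nu/c)$, with $s$ twice the order of $p^2$ in $(\Z/c\Z)^\times$; this is the same $s$ as in the proof of Proposition~\ref{logPsi-nueva}. Your matrix, with $k$ the order of $p$ in $(\Z/c\Z)^\times/\{\pm1\}$, is not in $\Gamma$ in two cases. If $p^k\equiv -1\pmod c$, then $\nu(p^k-1)/c\notin\Z[1/p]$. If $k$ is odd, the determinant $p^k$ is not a square.

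\emph{The edge contributions.} For the $s$ edges $e_j$ on the path, $U(e_j)=-\nu/c+p^j\Z_p$, and one can take $\gamma_j=\left(\begin{smallmatrix}p^{-j}&b\\0&1\end{smallmatrix}\right)\in\tilde\Gamma$. This $\gamma_j$ fixes $\infty$, whose symbol is $0$. No $\modsymb{0}{1}$ term appears, and such a term would accumulate rather than telescope. Hence $\mu(U(e_j))=\pm\modsymb{a_j}{c}$ with $a_j\equiv\nu p^{-j}\pmod c$.

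\emph{The count.} The path sum is $\pm\sum_{j}\modsymb{a_j}{c}$ over $s$ consecutive $j$. In it each $a\in J$ occurs $s/t$ times, where $t$ is the order of $p$ in $(\Z/c\Z)^\times$. That is twice whenever $t$ is odd, for instance $c=1$.
\begin{itemize}
\item Evenness of the $+$ symbol plays no role here. With your $k$ and $-1\in\langle p\rangle$, you would get only half of $\sum_{a\in J}\modsymb{a}{c}$.
\item Your fallback cannot repair a multiplicative discrepancy, since $I_\Psi\mapsto I_\Psi^{-1}$ only flips the sign.
\end{itemize}

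So the claim that each coset of $J$ is counted exactly once, which you yourself flag as the crux, is not what a correct execution of Step 2 gives. You would have to reconcile this multiplicity with how the sum is normalized in Darmon's Proposition 2.13, or simply cite Darmon as the paper does. The proof of Proposition~\ref{logPsi-nueva} sums over the same $0\le j<s$, so the factor is common to both propositions and cancels in Theorem~\ref{teorema_fuerte}.
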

\begin{proof}
    By \cite[Proposition 2.13]{Dar01} we have that
    \begin{equation}\label{temp equation}
        \mathrm{ord}_p(I_{\Psi_\nu}) = \sum_{a\in J}\modsymb{a}{c}.
    \end{equation}
    Thus, to conclude the proof it is sufficient to consider modulo $\ell^m$ Equation \eqref{temp equation}.
\end{proof}

\begin{rmk} Our convention of normalization of the modular symbols $\modsymb{a}{c}$, is to guarantee that $\modsymb{a}{c}\in \Z$, as mentioned in Remark \ref{remark_multiplication}. Thus, Equation \eqref{eq:ordPsi} is well-defined.
\end{rmk}

We deduce a refined version  of the description of $\mathrm{log}_p(I_{\Psi})$ in terms of modular symbols.

\begin{prop}\label{logPsi-nueva} Assume that  $c\equiv 1 \pmod{p}$ and denote by $J_1$ the subset of $(\mathbb{Z}/pc\mathbb{Z})^{\times}$ consisting of the cosets $a$ such that $a/\nu \equiv p^j\ (\mathrm{mod}\ c)$ for some $j \in \Z$ depending on $a$. Then 
\begin{equation}\label{eq:logPsi-nueva}
	\lambda_R(I_{\Psi}) =  \sum_{a\in J_1}\lambda_R(a) \modsymb{a}{pc}.
\end{equation}
\end{prop}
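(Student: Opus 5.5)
We will mimic the derivation of \cite[Proposition 2.15]{Dar01}, where Darmon expresses $\log_p(I_{\Psi_\nu})$ as a sum of $\log_p(a)$ weighted by modular symbols $\modsymb{a}{pc}$. The argument there rests on two facts: $\log_p$ is a homomorphism $\Q_p^\times\to\Q_p$ killing $p$, and the multiplicative integral is a limit of Riemann products. We replace $\log_p$ by $\lambda_R$, which is also a homomorphism killing $p$. It factors through the projection $\Q_p^\times\to \mu_{p-1}\cong(\Z/p\Z)^\times$ and is locally constant on $\Z_p^\times$. Because of this, the limit procedure simplifies: $\lambda_R$ of the multiplicative integral equals a \emph{finite} sum over a partition of $\hP_1(\Q_p)$ fine enough that the integrand is constant modulo $1+p\Z_p$ (and has constant valuation) on each piece.

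\textbf{Reduction to $\hP_1(\Q_p)$ sums.} By Remark \ref{simplify_rmk} we may assume $\Psi=\Psi_\nu$. We compute $x_\Psi,y_\Psi$ and $\gamma_\Psi$ explicitly from Equation \eqref{eq:optimal embedding}. Up to sign conventions, the fixed points are $\infty$ and $-\nu/c$. The generator $\gamma_\Psi$ is the image of $(p^k u,\, \ast)$ for the minimal $k$ such that the resulting element lies in $\Gamma$, so it is an explicit matrix of the form $\begin{pmatrix} p^{k} & \ast\\ 0 & p^{-k}\end{pmatrix}$ up to conjugation.

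Following \cite[Lemma 2.3, Proposition 2.7]{Dar01}, we choose a convenient $z$ (or let $z$ tend to a boundary point) so that the integrand becomes essentially a linear function $t-x_\Psi$ raised to a power. We then split $\hP_1(\Q_p)$ into a fundamental domain for $\langle\gamma_\Psi\rangle$ acting on $\hP_1(\Q_p)\setminus\{x_\Psi,y_\Psi\}$. This gives
\[
\lambda_R(I_{\Psi}) = \sum_{U} \lambda_R(t_U - x_\Psi)\,\mu\{x_\Psi\to y_\Psi\}(U),
\]
with $U$ running over the balls $a+p\Z_p$, $a\in(\Z/p\Z)^\times$, translated appropriately. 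There is no error term, since $\lambda_R$ is constant on each such ball. The terms involving balls where the valuation changes contribute only through $p$, and these are killed by $\lambda_R$.

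\textbf{Evaluating the measures.} Each $\mu\{x_\Psi\to y_\Psi\}(U)$ is evaluated via the definition in \S\ref{ss: Measure}. We pick $\gamma\in\tilde\Gamma$ sending the edge of $U$ to $e_*$ and obtain a difference of modular symbols $\modsymb{\cdot}{\cdot}$. Carrying this out over a full period of $\gamma_\Psi$, the powers $p^j$ appearing in the denominators produce exactly the cosets $a\in(\Z/pc\Z)^\times$ with $a/\nu\equiv p^j \pmod c$, i.e.\ $J_1$. The $\Gamma_0(M)$-invariance and the Hecke relation at $p$ ($U_p=1$ for split multiplicative reduction) convert these into the symbols $\modsymb{a}{pc}$.

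The hypothesis $c\equiv1\pmod p$ is used to identify the residue modulo $p$ of the relevant point $t-x_\Psi$ with the residue of $a$ itself. Since $c\equiv 1$, clearing the denominator $c$ does not change the class modulo $p$, so the weight is $\lambda_R(a)$, giving Equation \eqref{eq:logPsi-nueva}. The terms $\modsymb{\cdot}{c}$ that appear come with constant weight; they cancel using that the measure has total mass $0$, together with $\sum$ of $\lambda_R$ over a coset.

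\textbf{Main obstacle.} The hardest step is the bookkeeping matching Darmon's $p$-adic computation: tracking which matrices in $\tilde\Gamma$ are used and verifying that the weights really become $\lambda_R(a)$ rather than $\lambda_R$ of some shifted unit. That verification is exactly where $c\equiv 1\pmod p$ enters. I would first redo \cite[Proposition 2.15]{Dar01} line by line, checking at each step that only the homomorphism property and the vanishing on $p$ of $\log_p$ are used. Where the original argument uses continuity, I would substitute the local constancy of $\lambda_R$ modulo $1+p\Z_p$.
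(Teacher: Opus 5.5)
Your plan is correct and is essentially the paper's proof: you apply $\lambda_R$ to Darmon's expression $I_\Psi=p^s\mint_{\cF_\Psi}(t+\nu/c)\,d\mu(t)$, cut $\cF_\Psi$ into the balls $U_{j,a}$ whose masses are $\modsymb{a_j}{pc}$ (with $a_j\equiv ac \pmod p$ and $a_j\equiv \nu p^{-j}\pmod c$), and use $c\equiv 1\pmod p$ to get $\lambda_R(t+\nu/c)=\lambda_R(a)=\lambda_R(a_j)$. The differences are minor: you stop at level $p$ using local constancy of $\lambda_R$ and finite additivity of $\mu$, whereas the paper passes to level $p^n$ and checks independence of $n$ via the distribution relation, which is the same fact; also, with $c\equiv1\pmod p$ no constant-weight $\modsymb{\cdot}{c}$ terms actually arise, and for general $c$ their cancellation would come from $\mu(\cF_\Psi)=0$ by $\gamma_\Psi$-invariance, not merely from total mass zero on $\hP_1(\Q_p)$.
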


\begin{proof} We adapt the calculations performed in \cite[Proposition 2.18]{Dar01} to our tame setting. First, observe that the function $\lambda_R: \Q_p^{\times}\rightarrow R$ is continuous, where in $\Q_p^{\times}$ we consider the $p$-adic topology and in $R$ the discrete topology.

We denote by $\cF_{\Psi}$ the fundamental domain of the action of $\gamma_{\Psi}$ on $\hP_1(\Q_p) - \{x_{\Psi}, y_{\Psi}\}$ defined in the discussion following Lemma 2.6 in \cite{Dar01}. A description of $\cF_{\Psi}$ is as follows: for a given $n\in\N$, $\cF_{\Psi}$ can be written as the following disjoint union (see the proof of \cite[Lemma 2.18]{Dar01})
\begin{equation}\label{fundamental_domain_decomposition}
	\cF_{\Psi} = \bigcup_{j=0}^{s-1}\left(\bigcup_{a\in \left(\Z/p^n\Z\right)^\times} U_{j,a}\right),
\end{equation}
where $s$ is twice the order of $p^2$ in $(\Z/c\Z)^{\times}$ and
\[
	U_{j,a} := \left\{t\in \bP^1(\Q_p) \mid \mathrm{ord}_p\left(t + \frac{\nu}{c}\right) = j \ \text{ and } \ p^{-j}\left(t +\frac{\nu}{c}\right)\equiv a \pmod{p^n}\right\}.
\]
From \cite[Equation (129)]{Dar01}, we have $\mu\{x_{\Psi}\rightarrow y_{\psi}\}\left(U_{j,a}\right) = \modsymb{a_j}{p^nc}$ where $a_j\in \bZ$ is any integer such that $a_j \equiv ac \pmod{p^n}$ and $a_j \equiv \nu p^{-j} \pmod{c}$. 

Recall that we let $x_{\Psi}$ and $y_{\Psi}$ denote the attractive and repulsive points of $\gamma_\Psi$, respectively. Additionally, using the fact that $\Psi$ is of the form $\Psi_\nu$, we conclude that (see \cite[Proposition 2.7]{Dar01})  
\[
	I_{\Psi} = p^s \cdot \mint_{\cF_{\Psi}} \left(t + \frac{\nu}{c} \right) d\mu\{x_{\Psi}\rightarrow y_{\psi}\}(t).
\]
If we set $\mu := \mu\{x_{\Psi}\rightarrow y_{\psi}\}$, we get (recall that $\lambda_R(p)= 0$)
\[
    \lambda_R(I_{\Psi}) = \lambda_R\left(\mint_{\cF_{\Psi}} (t + \tfrac{\nu}{c}) d\mu(t)\right) = \lambda_R\left(\lim_{n\rightarrow \infty} \prod_{j=0}^{s-1} \prod_{a\in \left(\bZ/p^n\bZ\right)^\times} \left(t_{j,a} + \frac{\nu}{c}\right)^{\mu(U_{j,a})}\right),
\]
where $t_{j,a}\in U_{j,a}$. Using the fact that $\lambda_R: \Q_p^{\times}\rightarrow R$ is continuous and $\mu\left(U_{j,a}\right) = \modsymb{a_j}{p^nc}$, we deduce
\begin{align*}
	\lambda_R\left(\lim_{n\rightarrow \infty} \prod_{j=0}^{s-1} \prod_{a\in \left(\bZ/p^n\bZ\right)^\times} \left(t_{j,a} + \frac{\nu}{c}\right)^{\mu(U_{j,a})}\right) =  \lim_{n\rightarrow \infty}\sum_{j=0}^{s-1} \sum_{a\in \left(\bZ/p^n\bZ\right)^\times} \modsymb{a_j}{p^nc} \lambda_R\left(t_{j,a} + \frac{\nu}{c}\right).
\end{align*}
As $t_{j,a}\in U_{j,a}$, we have $p^{-j}\left(t_{j,a} + \frac{\nu}{c}\right) \equiv a \pmod{p}$. Moreover, we know that $a_j \equiv ac \pmod{p^n}$ and by our hypothesis on $c$, we obtain, by the definition of $\lambda_R$, that $\lambda_R\left(t_{j,a} + \frac{\nu}{c}\right) = \lambda_R(a)= \lambda_R(ac)= \lambda_R(a_j)$. Thus, we get
\[ \lambda_R(I_{\Psi})= \lim_{n\rightarrow \infty}\sum_{j=0}^{s-1} \sum_{a\in \left(\bZ/p^n\bZ\right)^\times} \modsymb{a_j}{p^nc} \lambda_R(a_j)=  \lim_{n\rightarrow \infty} \sum_{a\in J_n} \modsymb{a}{p^nc} \lambda_R(a).\]
To conclude the proof of the proposition, we will prove that for each $n\geq 1$, we have:
$$\sum_{a\in J_n}\lambda_R(a)\modsymb{a}{p^nc}= \sum_{a\in J_{n+1}}\lambda_R(a) \modsymb{a}{p^{n+1}c}.$$

Firstly, observe that the natural morphism $\mathbb{Z}/p^{n+1}c\mathbb{Z}\rightarrow \mathbb{Z}/p^nc\mathbb{Z}$ induces a well-defined map $\pi: J_{n+1}\rightarrow J_n$. We have that if $a\in \mathbb{Z}/p^nc\mathbb{Z}$ then $\pi^{-1}(a)= \{a+ kp^n \mid \ k= 0, ..., p-1\}$. Note that as $a+ kp^n \equiv a (\mathrm{mod }\  p)$ then $\lambda_R(a+ kp^n)= \lambda_R(a)$. Moreover, from the fact that $E$ has split multiplicative reduction at $p$ and \cite[Ch. I, formula (4.2)]{mazur-tate-teitelbaum}, we have $\sum_{k= 0}^{p-1}\modsymb{a+ kp^n}{p^{n+1}c}= \modsymb{a}{p^{n}c}$.

Putting these observations together, we obtain 
$$\sum_{a'\in \pi^{-1}(a)}\lambda_R(a') \modsymb{a'}{p^{n+1}c} = \sum_{k= 0}^{p-1}\lambda_R(a+  kp^n)\modsymb{a+ kp^n}{p^{n+1}c}= \lambda_R(a) \sum_{k= 0}^{p-1}\modsymb{a+ kp^n}{p^{n+1}c}= \lambda_R(a) \modsymb{a}{p^{n}c}.$$

Thus, we have:  
$$\sum_{a'\in J_{n+1}}\lambda_R(a') \modsymb{a'}{p^{n+1}c}= \sum_{a\in J_n}\sum_{a'\in \pi^{-1}(a)}\lambda_R(a') \modsymb{a'}{p^{n+1}c}=  \sum_{a\in J_n}\lambda_R(a) \modsymb{a}{p^{n}c}.$$

\end{proof}
\section{A refined version of exceptional zero conjectures}\label{A refined version of exceptional zero conjectures}
In the same way as in the proof of Theorem \ref{Result Darmon}, our study of the tame part of $p$-adic periods of the elliptic curves is based on the study of the ``exceptional vanishing'' of the Mazur-Tate elements considered in \cite{MT87}. In particular, the so-called ``refined conjecture'' stated in \cite[Introduction pag. 712]{MT87}. This conjecture was partially proven by de-Shalit in \cite{deShalit} following the strategy of the work by Greenberg--Stevens in \cite{greenberg-stevens}. The conjectures about the exceptional zero phenomena in \cite{MT87} are stated in the setting of elliptic curves. In particular, no conjectures are made for more general modular forms or even general twisted elliptic curves. The corresponding $p$-adic version of the exceptional zero conjectures for general twisted elliptic curves are described in \cite[\S13]{mazur-tate-teitelbaum}. As we have not found a precise reference, in the following we will describe twisted exceptional conjectures in the refined context.

We use the same notation as in the previous sections.

\begin{con}\label{conjetura twisteada} Let $c\geq 1$ be an integer coprime to $p$. Also, let $\ell \geq 5$ be a prime number which is coprime to the modular degree of $E$. Then for each Dirichlet character $\chi: (\Z/c\Z)^{\times}\rightarrow \C_p^{\times}$ such that $\chi(p)= 1$ we have: 
\begin{equation}\label{equation_twisted}
   v_R(q_E) \left(\sum_{a\in (\mathbb{Z}/pc\mathbb{Z})^{\times}} \chi(a) \otimes \lambda_R(a)\modsymb{a}{pc}\right) = \lambda_{R}(q_E)\sum_{a\in (\mathbb{Z}/c\mathbb{Z})^{\times}} \left(\chi(a) \otimes \modsymb{a}{c}\right) \ \text{ in } \ \Z[\mathrm{Im}(\chi)] \otimes_{\bZ} R,
\end{equation}
where $\Z[\mathrm{Im}(\chi)]$ denotes the $\Z$-algebra generated by the image of $\chi$.
\end{con}

\begin{rmk}\label{importantremark}
    Recall the constant $D$ from Equation \eqref{modsymb}, if $D$ is coprime to $\ell$ then we can use the ``non-normalized modular symbols'' $\modsymb{a}{b}_{\mathrm{raw}}$ to state this conjecture and thus $D$ would not be necessary. Also, note that if instead of considering the smallest number $D$ on Equation \eqref{modsymb}, we consider a larger number $kD$ where $k\in \bN$, then that will be equivalent to multiplying Equation \eqref{equation_twisted} by $k$, and if $(k,\ell) > 1$, then this may trivialize the conjecture.
\end{rmk}

\begin{rmk} We comment on the hypotheses of this conjecture. 
\begin{enumerate}
\item The condition $\chi(p)= 1$ is needed to guarantee the existence of an exceptional zero as our elliptic curve has split multiplicative reduction at $p$. 
\item Even though the conditions that $\ell \geq 5$ and that $\ell$ is coprime to the modular degree of $E$ are present in de-Shalit's work when the conductor of $E$ is prime (see \cite[Theorem 0.3]{deShalit}), de-Shalit mentions that the coprimality with the modular degree is a "buit-in-flag" of his approach. Note that if we allow any $\ell\geq 5$ there are counter-examples to this conjecture (see \cite{LLe24}). However, in the counter-examples found therein, it appears that the condition that $\ell$ is coprime to the modular degree of $E$ is stronger that the condition that $(\ell, \#E(\bQ)_{\mathrm{Tor}}) = 1$. 

\item We did not mention any condition connecting $\ell$ and $c$. However, for relating Conjecture \ref{conjetura twisteada} with Theorem \ref{teorema_fuerte}, we will need $(\ell,\phi(c)) = 1$  where $\phi$ denotes the Euler-totient function. Nonetheless, we do not see any reasons to expect why this condition might be necessary in the conjecture above.
\end{enumerate}
\end{rmk}

We relate Conjecture \ref{conjetura twisteada} to the conjectures stated in \cite{MT87}. Recall that if $G$ is a finite abelian group and $A\subseteq \bQ$ a subring, the augmentation ideal is defined as $I(A,G) := \mathrm{ker}(\mathrm{aug})$, where $\mathrm{aug}:A[G] \rightarrow A; g\mapsto 1$. By \cite{LLe24}, there exists an isomorphism of $A$-modules
	\[
		I(A,G)/I(A,G)^2 \cong \bigoplus_{\substack{\ell \notin A^\times\\ \ell \mid |G|}} \mathrm{Syl}_\ell(G),
	\]
	where $\mathrm{Syl}_{\ell}(G)$ is the $\ell$-Sylow subgroup of $G$. If $G = (\bZ/p\bZ)^\times$ there is a natural isomorphism of $A$-modules
	\begin{equation}\label{eq_iso}
		I(A,G)/I(A,G)^2 \cong \bigoplus_{\substack{\ell \notin A^\times\\ \ell \mid p-1}} \bZ/\ell^{n_{\ell}}\bZ,
	\end{equation}
	where $n_{\ell}\in \bZ_{\geq 1}$ and $\ell^{n_{\ell}}|| p-1$. This is given by $\sum_{g\in G} \alpha_g g + I(A,G)^2 \mapsto \sum_{g\in G} \alpha_g g \pmod{\ell^{n_\ell}}$, for each $\ell$.

\begin{con}[{\textit{c.f.} \cite[Refined Conjecture]{MT87}}]\label{refined_conjecture} 
We have that
	\begin{equation}\label{refined_equation}
		\prod_{1 \leq a \leq p-1} a^{\mathrm{ord}_p(q_E)\modsymb{a}{p}D} \equiv \left(\frac{q_E}{p^{\mathrm{ord}_p(q_E)}}\right)^{\modsymb{0}{p}D} \pmod{p} \text{ in } \left(\bZ/p\bZ\right)^\times,
	\end{equation}
where $D$ is defined as in Equation \eqref{modsymb}
\end{con}
\begin{rmk}
    Note that the term $D$ in Equation \eqref{refined_equation} does not appear in \cite[Refined Conjecture, p. 712]{MT87}. However, if we do not include the factor $D$, then the modular symbols may have denominators that are not coprime to $p-1$. Thus, the terms appearing Equation \eqref{refined_equation} will not be defined. Therefore, to guarantee that Equation \eqref{refined_equation} is well-defined, we add the factor $D$ (this convention was also used by de-Shalit, see \cite[Introduction]{deShalit}).
\end{rmk}

Let $\ell\geq 5$ be a prime number, then Conjecture \ref{refined_conjecture} implies Conjecture \ref{conjetura twisteada} when $c=1$. In fact, it is enough to apply $\lambda_R$ to Equation \eqref{equation_twisted}.

\begin{con}[{\textit{c.f.} \cite[Conjecture 6]{MT87}}]\label{Conjecture_6} Let $p > 2$ and $G_p =: \left(\bZ/p\bZ\right)^\times/\left<-1\right>$. Let $A\subseteq \bQ$ be a subring such that $\left\{\modsymb{a}{p}\right\}_{0\leq a \leq p-1}\subseteq A$, $\frac{\modsymb{0}{1}}{2\mathrm{ord}_p(q_E)} \in A$ and $\frac{1}{\#E(\bQ)_{\mathrm{Tor}}}\in A$. Then $\sum_{1 \leq a \leq \frac{p-1}{2}}\modsymb{a}{p} a\in I(A,G_p)$ and
	\begin{equation}\label{equacion_con_6}
		\frac{1}{2}\sum_{1 \leq a \leq p-1}\modsymb{a}{p} a \equiv \frac{\modsymb{0}{1}}{2\mathrm{ord}_p(q_E)} (\tilde{q}_E - 1) \text{ in } I(A,G_p)/I(A,G_p)^2.
	\end{equation}
\end{con}
\begin{rmk}
    In the original statement of Conjecture 6 of \cite{MT87}, the condition that $\frac{1}{\#E(\bQ)}\in A$ is not present. However, it was shown in \cite{LLe24} that this condition is necessary for Conjecture \ref{Conjecture_6}.
\end{rmk}
\begin{lem}\label{lem:5.5}
    Let $A\subseteq \bQ$ be a subring defined as in Conjecture \ref{Conjecture_6}. Then Conjecture \ref{Conjecture_6} implies Conjecture \ref{conjetura twisteada} for $c = 1$ and all primes $\ell\notin A^\times$.
\end{lem}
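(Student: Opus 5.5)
We specialize Conjecture \ref{conjetura twisteada} to $c=1$: the only character is trivial and $(\Z/pc\Z)^\times=(\Z/p\Z)^\times$. The statement to prove becomes
\[
v_R(q_E)\sum_{a=1}^{p-1}\lambda_R(a)\modsymb{a}{p} \;=\; \lambda_R(q_E)\modsymb{0}{1}\quad\text{in } R=\Z/\ell^m\Z,
\]
where we use $\modsymb{0}{1}$ for the unique class of $(\Z/1\Z)^\times$ (with the convention $\modsymb{a}{1}=\modsymb{0}{1}$). The strategy is to push the congruence \eqref{equacion_con_6}, which lives in $I(A,G_p)/I(A,G_p)^2$, through the isomorphism \eqref{eq_iso} onto its $\ell$-component and then through the fixed logarithm $\mathrm{log}:(\Z/p\Z)^\times\to R$.

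\textbf{Step 1 (identification map).} We pass from $G_p=(\Z/p\Z)^\times/\langle -1\rangle$ to $(\Z/p\Z)^\times$. Since $\ell$ is odd (the relevant $\ell\ge5$), the $\ell$-Sylow of $(\Z/p\Z)^\times$ maps isomorphically onto that of $G_p$. Concretely, we use the map $I(A,G_p)\to R$, $\sum\alpha_g g\mapsto \sum\alpha_g\,\mathrm{log}(g)$. This is well defined on $G_p$ because $\mathrm{log}(-1)=0$ in $R$, as $-1$ has order $2$ and $\ell$ is odd. The map kills $I^2$, since $(g-1)(h-1)\mapsto \mathrm{log}(gh)-\mathrm{log} g-\mathrm{log} h=0$, and by \eqref{eq_iso} it is the $\ell$-component composed with $\mathrm{log}$. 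We also need $\ell\notin A^\times$, so that $A\to \Z_{(\ell)}\to R$ makes sense, and this is exactly the hypothesis on $\ell$. The elements of $A$ appearing as coefficients, such as $\frac{\modsymb{0}{1}}{2\mathrm{ord}_p(q_E)}$, are then sent to $R$.

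\textbf{Step 2 (apply the map).} Applying the map to both sides of \eqref{equacion_con_6} gives
\[
\tfrac12\sum_{a=1}^{p-1}\modsymb{a}{p}\lambda_R(a)=\frac{\modsymb{0}{1}}{2\,\mathrm{ord}_p(q_E)}\,\lambda_R(q_E),
\]
using $\lambda_R(q_E)=\mathrm{log}(\tilde q_E)$. For the right-hand side, note that $\tilde q_E-1$ is the augmentation-ideal element $[\tilde q_E]-1$, so its image is $\mathrm{log}(\tilde q_E)$. Multiplying by $2\,\mathrm{ord}_p(q_E)$ and reducing mod $\ell^m$ then yields the displayed identity with $v_R(q_E)$, after clearing the factor $2$, which is a unit in $R$.

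\textbf{Main obstacle: normalization.} The modular symbols here carry the factor $D$, while Conjecture \ref{Conjecture_6} is phrased with whatever normalization makes $\modsymb{a}{p}\in A$. We must check that both sides scale by the same $D$, so that the identity in $R$ is homogeneous. We must also check that the combination $\frac{\modsymb{0}{1}}{2\mathrm{ord}_p(q_E)}\cdot 2\,\mathrm{ord}_p(q_E)$ makes sense in $\Z_{(\ell)}$. If $\ell\mid \mathrm{ord}_p(q_E)$, both sides are multiplied by something vanishing mod $\ell^m$ only on the left-hand side's coefficient, so we argue in $A$ before reducing: multiply in $A$ first, then map to $R$. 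A smaller point is to verify that the sum over $1\le a\le p-1$ in $(\Z/p\Z)^\times$ agrees with the $G_p$-sum up to the factor $\frac12$ handled above.
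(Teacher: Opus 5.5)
Your proposal is correct and follows the paper's proof. The paper projects the congruence of Conjecture~\ref{Conjecture_6} onto the $\ell$-component of $I(A,G_p)/I(A,G_p)^2\cong\bigoplus\Z/\ell^{n_\ell}\Z$ via the logarithm; your explicit map $\sum\alpha_g g\mapsto\sum\alpha_g\,\mathrm{log}(g)$, which you check kills $I^2$ and descends to $G_p$, plays the role of that isomorphism together with the identification $\mathrm{Syl}_\ell(G_p)\cong\mathrm{Syl}_\ell((\Z/p\Z)^\times)$. Then, as in the paper, you multiply by $2\,\mathrm{ord}_p(q_E)$ and use that $2$ is invertible for odd $\ell$. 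Your worry about $\ell\mid\mathrm{ord}_p(q_E)$ is not a real obstacle: since $\frac{\modsymb{0}{1}}{2\,\mathrm{ord}_p(q_E)}\in A$ and $A\to R$ is a ring homomorphism, multiplying before reducing works, exactly as you conclude.
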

\begin{proof}
    By Equation \eqref{eq_iso} we have that $I(A,G_p)/I(A,G_p)^2 \cong \bigoplus_{\substack{\ell|\frac{p-1}{2}\\ \ell \notin A^\times}} \bZ/\ell^{n_{\ell}}\bZ$. Remark that $\mathrm{Syl}_{\ell}(G_p) \cong \mathrm{Syl}_{\ell}\left(\left(\bZ/p\bZ\right)^\times\right)$ for each prime $\ell > 2$. Now, consider the natural projection $\pi_{\ell}:\bigoplus_{\ell|\frac{p-1}{2}; \ell \notin A^\times} \bZ/\ell^{n_{\ell}}\bZ \rightarrow \bZ/\ell^{n_{\ell}}\bZ$. Then, by composing the isomorphism of equation \eqref{eq_iso} and $\pi_{\ell}$ in Equation \eqref{equacion_con_6} we obtain
    \begin{equation}\label{temp:equation}
        \frac{1}{2}\sum_{1 \leq a \leq p-1} \mathrm{ord}_p(q_E)\modsymb{a}{p} a \equiv \lambda_R(\tilde{q}_E) \frac{\modsymb{0}{1}}{2} \pmod{\ell^{n_{\ell}}}.
    \end{equation}
    Reordering Equation \eqref{temp:equation} we obtain
    \[
        2\sum_{1 \leq a \leq p-1} \mathrm{ord}_p(q_E)\modsymb{a}{p} \lambda_R(a) \equiv 2\lambda_R(\tilde{q}_E)\modsymb{0}{1} \pmod{\ell^{n_{\ell}}}.
    \]
    Finally, because $\ell > 2$ then $2$ is invertible. Again, noting that the only character is the trivial one, we conclude the proof.
\end{proof}

\begin{lem}\label{lemma_assumption}
	Equation \eqref{equation_twisted} is invariant under $\Q$-isogenies of degree coprime to $\ell$.
\end{lem}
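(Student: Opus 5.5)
Let $\varphi: E \to E'$ be a $\bQ$-isogeny of degree $d$ with $(d,\ell)=1$. The plan is to compare the two sides of Equation \eqref{equation_twisted} for $E$ and $E'$. We show that each side transforms by a common factor that is a unit in $R$, namely a power of $d$ possibly multiplied by a ratio of normalizing constants prime to $\ell$. Since both sides are $R$-linear in these quantities, the equation for $E$ will then hold if and only if it holds for $E'$. Because the isogeny graph of the class is connected through prime-degree isogenies and their duals, it suffices to treat a single isogeny of prime degree $d\neq \ell$.

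First we treat the Tate period. Both curves have split multiplicative reduction at $p$, and by Tate's uniformization $\varphi$ induces a map $\C_p^\times/q_E^{\Z}\to \C_p^\times/q_{E'}^{\Z}$. Up to composing with a dual isogeny, this map is either $u\mapsto u^{d}$ with $q_{E'}=q_E^{d}$, or the quotient by a subgroup of $\mu_d$ with $q_{E'}^{d}=q_E$; for prime $d$ the general case is $q_{E'}^{a}=q_E^{b}$ with $ab=d$. Applying the homomorphisms $v_R,\lambda_R$ gives $a\,(v_R(q_{E'}),\lambda_R(q_{E'}))=b\,(v_R(q_E),\lambda_R(q_E))$. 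Since $a,b$ are units in $R$, the pair $(v_R,\lambda_R)$ changes by a common unit scalar $b/a$.

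Next we treat the modular symbols. Both curves correspond to the same newform $f_E$, so the raw symbols differ only through the periods $\Omega^+$. Pulling back the Néron differential gives $\varphi^*\omega_{E'}=u\,\omega_E$ with $u\in\Z$ dividing $d$, and the real periods satisfy $\Omega^+_{E'}=r\,\Omega^+_E$ with $r\in\Q^\times$ a ratio of divisors of $d$ times possibly a factor $2$ coming from the rectangular versus non-rectangular lattice shape. It follows that $\modsymb{a}{b}^+_{\mathrm{raw},E'}=r^{-1}\modsymb{a}{b}^+_{\mathrm{raw},E}$ for all $a/b$. The normalized symbols then differ by the constant $\kappa = D_{E'}/(r D_E)\in\Q^\times$, uniformly in $a/b$. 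The step I expect to be the main obstacle is showing that $\kappa$ is an $\ell$-adic unit. The concern is the minimality of $D$: the $\ell$-part of $D$ is determined by the $\ell$-parts of the denominators of the raw symbols, and multiplying all raw symbols by a scalar $r$ that is prime to $\ell$ does not change the $\ell$-adic valuation of their minimal common denominator. Hence $\mathrm{ord}_\ell(D_{E'})=\mathrm{ord}_\ell(D_E)+\mathrm{ord}_\ell(r)=\mathrm{ord}_\ell(D_E)$, so $\kappa\in\Z_{(\ell)}^\times$. We also need the factor $2$ and the Manin-constant factors to be prime to $\ell$, which holds since $\ell\geq 5$.

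Finally, multiplying Equation \eqref{equation_twisted} for $E$ by the unit $\kappa\, b/a$ of $R$ yields exactly Equation \eqref{equation_twisted} for $E'$, and the same argument applied with the dual isogeny gives the converse. The modular-degree hypothesis in Conjecture \ref{conjetura twisteada} plays no role in this comparison; all that is used is that the equation itself is carried across by a unit.
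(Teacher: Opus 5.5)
Your proposal is correct and follows the same route as the paper. A relation $q_{E'}^{a}=q_E^{b}$ with exponents prime to $\ell$ (the paper cites Silverman's exercise for $q_{E_1}^n=q_{E_2}^m$) rescales $(v_R(q_E),\lambda_R(q_E))$ by a common unit, and the modular symbols of isogenous curves differ by the rational period ratio $\Omega^+_{E'}/\Omega^+_E$, which is an $\ell$-adic unit (the paper cites Dokchitser--Dokchitser for this); your additional check that the normalizing constants contribute a factor $D_{E'}/(rD_E)\in\Z_{(\ell)}^\times$ is care the paper skips, since its identity $\Omega_{E_1}\modsymb{a}{b}_{E_1}=\Omega_{E_2}\modsymb{a}{b}_{E_2}$ is literally true only for the raw symbols.
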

\begin{proof} Let $E_1/\Q$ be an elliptic curve of conductor $N$ with split multiplicative reduction at $p$ and  let $E_2/\Q$ be an elliptic curve such that there is $\Q$-isogeny to $E_1$ of degree coprime to $\ell$. In particular, $E_2$ also has split multiplicative reduction at $p$ as both elliptic curves are associated to the same modular form. Denote by $q_{E_1}$ and $q_{E_2}$ the Tate $p$-adic periods of $E_1$ and $E_2$ respectively. Then there exist $n,m\in \N$ such that $q_{E_1}^n = q_{E_2}^m$ with $(n,\ell) = (m, \ell) = 1$ (\cite[Chapter V, Exercise 5.10]{Sil94}). In particular, $q_{E_1} = \zeta q_{E_2}^{\frac{m}{n}}$for some $n$-root of unity $\zeta$. We obtain the following relations
	\begin{equation}\label{isogeny1}
		\lambda_R(q_{E_1}) = \lambda_R\left(\zeta q_{E_2}^{\frac{m}{n}}\right) = \lambda_R(\zeta) + \lambda_R (q_{E_2}^{\frac{m}{n}}) = \frac{m}{n}\lambda_R (q_{E_2}),
	\end{equation}
	\begin{equation}\label{isogeny2}
		v_R(q_{E_1}) = v_R(\zeta q_{E_2}^{\frac{m}{n}}) = v_R(\zeta) + v_R(q_{E_2}^{\frac{m}{n}}) = \frac{m}{n}v_R(q_{E_2}).
	\end{equation}
By the definition of modular symbols (see Equation \eqref{modsymb}) we have the following relation between the modular symbol of $E_1$ and $E_2$ at a point $\frac{a}{b}\in \bP^1(\Q)$ 
	\begin{equation}\label{isogeny3}
		\Omega_{E_1}\modsymb{a}{b}_{E_1} = \Omega_{E_2} \modsymb{a}{b}_{E_2}.
	\end{equation}
So $\modsymb{a}{b}_{E_1} = \frac{\Omega_{E_2}}{\Omega_{E_1}}\modsymb{a}{b}_{E_2}$. Moreover, note that by \cite[Lemma 2.3]{DD15} we have $\frac{\Omega_{E_2}}{\Omega_{E_1}}\in \Q^{\times}$ and $\mathrm{ord}_{\ell}\left(\frac{\Omega_{E_2}}{\Omega_{E_1}}\right) = 0$.

Finally, using Equations \eqref{isogeny1}, \eqref{isogeny2}, and \eqref{isogeny3} in Equation \eqref{equation_twisted}, we conclude the lemma.
\end{proof}

From the previous lemma, we see that Assumption \ref{Q_isogeny} is redundant if we assume that $\ell$ is coprime to the modular degree of $E$. In fact, denote by $\tilde{E}$ the optimal elliptic curve in the $\bQ$-isogeny class of $E$. Recall that every parametrization $\phi:X_0(N) \rightarrow E$ factors through $\tilde{E}$. So, the assumption that $\ell$ is coprime to the modular degree of $E$ is equivalent to the assumption that $\ell$ is coprime to the modular degree of $\tilde{E}$ and the isogeny degree of $\tilde{E} \rightarrow E$. Finally, using Lemma \ref{lemma_assumption} allows us to see that Theorem \ref{theorem_1} is invariant under isogenies of degree coprime to $\ell$. Therefore, if $\ell$ is coprime to the modular degree of $E$, then we can assume, without loss of generality, that $E$ is isomorphic to $\tilde{E}$.

\begin{rmk} Even if $\mathrm{cond}(E)= p$, it is not clear that Conjecture \ref{conjetura twisteada} is a consequence of the work by de-Shalit \cite{deShalit}. In fact, in \cite{deShalit} it is assumed that $\mathrm{cond}(E)= p$, and if the character $\chi$ is of conductor $c> 1$, then the twisted elliptic curve gives rise to a weight $2$ modular form of level $pc$. 
\end{rmk}

\section{Periods and the Exceptional zero phenomena}\label{Periods and the Exceptional zero phenomena}

In this section we will relate the conjectures stated in the previous section and the study of the periods attached to the elliptic curves. 

Let $E$ be an elliptic curve, $\Psi_{\nu}$ an optimal embedding of conductor $c$ with $(\nu, c) = 1$ and $\ell > 3$ a prime coprime to the degree of the strong Weil parametrization of $E$.

\begin{thm}\label{teorema_fuerte}
\begin{enumerate}[label=(\alph*)]
\item\label{item 1} Assume that $c\equiv 1 \pmod{p}$ and $\ell \nmid \phi(c)$.  If conjecture \ref{conjetura twisteada} is true, then 
\begin{equation}\label{e:refined L-invariants}
v_R(q_E) \cdot \lambda_{R}(I_{\Psi})= \lambda_{R}(q_E)\cdot v_R(I_{\Psi}).
\end{equation}
\item\label{item 2} If $\mathrm{cond}(E)= p$, the prime $\ell$ is coprime to the degree of the modular parametrization of $E$, and the conductor of $\Psi$ is $1$, then Equation \eqref{e:refined L-invariants} holds unconditionally and $v_R(q_E)\in R^{\times}$
\end{enumerate}
\end{thm}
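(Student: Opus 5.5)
The plan for \ref{item 1} is to rewrite both $\lambda_R(I_{\Psi})$ and $v_R(I_{\Psi})$ as character sums, and then apply Conjecture \ref{conjetura twisteada} one character at a time. By Remark \ref{simplify_rmk} we may take $\Psi=\Psi_\nu$. Then Propositions \ref{ordPsi} and \ref{logPsi-nueva} give
\[
v_R(I_\Psi)=\sum_{a\in J}\modsymb{a}{c}, \qquad \lambda_R(I_\Psi)=\sum_{a\in J_1}\lambda_R(a)\modsymb{a}{pc}.
\]
Here $J$ and $J_1$ are cut out by the same condition: the class of $a$ modulo $c$ lies in the coset $\nu H$, where $H=\langle p\rangle\subseteq(\Z/c\Z)^\times$.

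Let $k=[(\Z/c\Z)^\times:H]$, and let $\widehat{G}$ be the group of characters $\chi$ of $(\Z/c\Z)^\times$ with $\chi(p)=1$; it has order $k$. Orthogonality gives
\[
\mathbbm{1}_{\nu H}(a)=\frac1k\sum_{\chi\in\widehat{G}}\chi(\nu)^{-1}\chi(a).
\]
Since $k\mid\phi(c)$ and $\ell\nmid\phi(c)$, $k$ is invertible in $R$, so this identity holds in $\Z[\mu_k]\otimes_\Z R$. For $a\in(\Z/pc\Z)^\times$ we regard $\chi(a)$ as depending only on $a \bmod c$. Substituting the identity into both formulas gives
\begin{align*}
\lambda_R(I_\Psi)&=\frac1k\sum_{\chi\in\widehat{G}}\chi(\nu)^{-1}\sum_{a\in(\Z/pc\Z)^\times}\chi(a)\otimes\lambda_R(a)\modsymb{a}{pc},\\
v_R(I_\Psi)&=\frac1k\sum_{\chi\in\widehat{G}}\chi(\nu)^{-1}\sum_{a\in(\Z/c\Z)^\times}\chi(a)\otimes\modsymb{a}{c}.
\end{align*}
Now multiply the first line by $v_R(q_E)$. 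Each $\chi\in\widehat{G}$ satisfies $\chi(p)=1$, so Conjecture \ref{conjetura twisteada} replaces every inner sum by $\lambda_R(q_E)$ times the corresponding inner sum of the second line. This yields Equation \eqref{e:refined L-invariants} in $\Z[\mu_k]\otimes R$.

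To bring the equality back to $R$, note that $\Z[\mu_k]$ is a free $\Z$-module containing $1$ as part of a basis. Hence $R\to\Z[\mu_k]\otimes R$ is injective, and since both sides of Equation \eqref{e:refined L-invariants} come from $R$, the equality already holds in $R$. One bookkeeping point: Conjecture \ref{conjetura twisteada} is applied over $\Z[\mathrm{Im}(\chi)]\subseteq\Z[\mu_k]$, and its conclusion must be pushed forward to $\Z[\mu_k]\otimes R$.

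For \ref{item 2}, take $c=1$. Then $c\equiv1\pmod p$ and $\phi(1)=1$ hold trivially, and the only character is the trivial one. By part \ref{item 1}, it therefore suffices to prove Conjecture \ref{conjetura twisteada} for $c=1$. As noted after Conjecture \ref{refined_conjecture}, that case follows by applying $\lambda_R$ to the refined conjecture \eqref{refined_equation}. For prime conductor $p$, $\ell\ge5$ and $\ell$ coprime to the modular degree, this is de-Shalit's theorem \cite[Theorem 0.3]{deShalit}. The claim $v_R(q_E)\in R^\times$ is \cite[\S6.3]{deShalit}. Concretely, for $c=1$ the equality reads
\[
v_R(q_E)\sum_{a=1}^{p-1}\lambda_R(a)\modsymb{a}{p}=\lambda_R(q_E)\modsymb{0}{1},
\]
and Propositions \ref{ordPsi} and \ref{logPsi-nueva} identify the left-hand sum with $\lambda_R(I_\Psi)$ and $\modsymb{0}{1}$ with $v_R(I_\Psi)$.

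The main obstacle I expect is matching normalizations in \ref{item 2}. De-Shalit's theorem must be stated for exactly our $D$-scaled symbols $\modsymb{a}{b}$, with $\omega_E$ and the Manin constant as in \cite[\S6.1.c]{deShalit}, and for the optimal curve, which we may assume by Lemma \ref{lemma_assumption}. A mismatched factor divisible by $\ell$ would trivialize the identity (Remark \ref{importantremark}). I would first check that his $D$ is our $D$, and use Remark \ref{prop_denominadores} to control $\ell$-divisibility of the denominators. The exceptional case of conductor $11$ would be checked by hand.
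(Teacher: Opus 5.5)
Your proposal is correct and is essentially the paper's own proof. It uses the same orthogonality over the characters of $(\Z/c\Z)^\times/\langle p\rangle$ and the same use of $\ell\nmid\phi(c)$ to cancel $|G|=k$. It also obtains part (b) as the case $c=1$ of part (a) combined with \cite[Thm.~0.3 and \S6.3]{deShalit}, and your remark that $R\to\Z[\mu_k]\otimes_{\Z}R$ is injective just makes explicit a descent step the paper passes over.

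There is one caveat, which the paper's proof shares: the conductor-$11$ check you defer actually fails for the unit claim. Take $11.a2$ (Cremona $11a1$, modular degree $1$) and $\ell=5$. Then $\ord_{11}(q_E)=5$, as recorded in \S\ref{s: j-invariant}, so the clause $v_R(q_E)\in R^\times$ genuinely needs $\ell>5$, as in Proposition \ref{result_deShalit}. The equality \eqref{e:refined L-invariants} still holds there, but only trivially: $\tilde q_E\equiv -1\pmod{11}$ forces $\lambda_R(q_E)=0$ as well.
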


\begin{proof} Using Propositions \ref{ordPsi} and $\ref{logPsi-nueva}$, we see that to prove Equation (\ref{e:refined L-invariants}) it is enough to prove
\begin{equation}\label{qeproof}
     v_R(q_E)\cdot \left(\sum_{a\in J_1} \lambda_R(a) \modsymb{a}{pc}\right) = \lambda_R(q_E) \cdot \left(\sum_{a\in J} \modsymb{a}{c}\right).
\end{equation}
To prove \ref{item 1} we assume that Conjecture \ref{conjetura twisteada} is true; that is, for every character $\chi: (\Z/c\Z)^{\times}\rightarrow \C_p^{\times}$ such that $\chi(p) = 1$ we have that the following equality
\begin{equation}\label{qeproof2}
	v_R(q_E) \left(\sum_{a\in (\mathbb{Z}/pc\mathbb{Z})^{\times}} \chi(a) \otimes \lambda_R(a)\modsymb{a}{pc}\right) = \lambda_{R}(q_E)\sum_{a\in (\mathbb{Z}/c\mathbb{Z})^{\times}} \left(\chi(a) \otimes \modsymb{a}{c}\right)  \text{ in } \Z[\mathrm{Im}(\chi)] \otimes R.
\end{equation} 
Let $G = (\Z/c\Z)^{\times}/\left<p\right>$ and by $G^{\vee} = \mathrm{Hom}(G, \mathbb{Q}/\mathbb{Z})$ its Pontryagin dual. Then for each $a\in \Z$ coprime to $c$ we have
\[
	\sum_{\chi\in G^{\vee}} \chi\left(\frac{a}{\nu}\right) = \begin{cases}
	|G| & \text{if} \ \frac{a}{\nu}\equiv p^j\ (\mathrm{mod}\ c) \ \text{for some} \ j \in \Z,\\
	0 & \text{otherwise}.
	\end{cases}
\]
Multiplying Equation (\ref{qeproof2}) by $\chi(\nu^{-1})$ and taking the sum over all $G^{\vee}$ we obtain the following equality
\[
	\sum_{\chi \in G^{\vee}} v_R(q_E) \left(\sum_{a\in (\mathbb{Z}/pc\mathbb{Z})^\times} \chi\left(\frac{a}{\nu}\right) \otimes \lambda_R(a)\modsymb{a}{c}\right) = \sum_{\chi \in G^{\vee}} \lambda_{R}(q_E)\sum_{a\in (\mathbb{Z}/c\mathbb{Z})^\times} \left(\chi\left(\frac{a}{\nu}\right) \otimes \modsymb{a}{c}\right)  \text{ in } \Z[\mathrm{Im}(\chi)] \otimes R.
\]
From this and the above observation we deduce the following equality
\[
	|G|\cdot v_R(q_E) \cdot \left(\sum_{a\in J_1} \lambda_R(a) \modsymb{a}{pc}\right) = |G|\cdot \lambda_R(q_E)\cdot \left(\sum_{a\in J} \modsymb{a}{c}\right) \text{ in } R.
\]
Finally, the hypothesis $(\ell, \phi(c)) = 1$ implies $(\ell, |G|) = 1$, and so we can divide the previous equation by $|G|$, and  part \ref{item 1} of the theorem follows.

Part \ref{item 2} of the theorem is a direct consequence of \ref{item 1} and \cite[Thm. 0.3 and \S6.3]{deShalit}. 
\end{proof}

\begin{rmk} 
As mentioned in \cite{deShalit}, the condition that $\ell$ is coprime to the degree of the strong Weil parametrization of $E$ is a consequence of working at the level of the Jacobian of $X_0(p)$. Though, as pointed out by de Shalit in the same article, if $\ell \geq 5$ then $p-1$ and the degree of the strong Weil parametrization rarely have any common factors. 
\end{rmk}

\begin{rmk}
As mentioned in the introduction, our main theorem only works when $c = 1$. This is a consequence of refining Darmon's proof of \cite[Theorem 1]{Dar01}, as then we need to use de-Shalit results which are in turn refined versions of the Greenberg-Stevens main theorem in \cite{greenberg-stevens}. De-Shalit results are only proven in very restrictive cases. So, we should be able to prove a our theorem for any conductors of the elliptic curve and the optimal embedding, if Conjecture \ref{conjetura twisteada} is true in full generality.

A possible way of proving Conjecture \ref{conjetura twisteada} could be done, for example, by generalizing \cite[Theorem 0.3]{deShalit} for elliptic curves of arbitrary conductor. One of the main step in proving such result was a ``refined'' Hida theory. For the proof, de Shalit used results in \cite{deShalit95} which only cover the case when the elliptic curve has a prime conductor. But in \cite{MR1881569} a more general Hida theory is developed. Therefore, it is expected that such a result could be used following \cite{deShalit} to generalize the theorems needed.
\end{rmk}

\section{The $p$-adic valuation of the $j$-invariant} \label{s: j-invariant}
Let $E/\bQ$ be a rational elliptic curve with split multiplicative reduction at a prime $p$ and $\ell|p-1$ be a prime number. We use the same notation as the previous sections, in particular, we denote by $q_E$ the Tate $p$-adic period. In this section we will not assume Assumption \ref{Q_isogeny}.

In \cite{mazur-tate-teitelbaum} the authors defined the $\cL$-invariant as the ratio $\frac{\log_p(q_E)}{\mathrm{ord}_p(q_E)}$. We are interested in the ``refined $\cL$-invariant'' (see \cite{lecouturier19}, \cite{MR1881569}, and \cite{deShalit}) defined as the ratio $\frac{\lambda_R(q_E)}{v_R(q_E)}\in R$. For the refined $\cL$-invariant to be well-defined, it is necessary that $(\mathrm{ord}_p(q_E), \ell) = 1$. 
\begin{prop}[{de Shalit \cite[\S 6.3]{deShalit}}]\label{result_deShalit}
	If $E$ has conductor $p$, $\ell > 5$ and is coprime to the modular degree of $E$, then $(\mathrm{ord}_p(q_E), \ell) = 1$.
\end{prop}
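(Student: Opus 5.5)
The plan is to show that $\ell$ cannot divide $\mathrm{ord}_p(q_E)$ by linking this valuation to the component group of the Néron model at $p$, and then to the Eisenstein ideal of $X_0(p)$. The Tate parametrization shows that the component group of the special fibre of $E$ at $p$ is cyclic of order $\mathrm{ord}_p(q_E)=-\mathrm{ord}_p(j(E))$. So it suffices to prove that $\ell$ does not divide the order $\Phi_E$ of this component group.

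The first step is to relate $\Phi_E$ to the component group $\Phi_J$ of $J_0(p)$. By Mazur, $\Phi_J$ is cyclic of order equal to the numerator $n$ of $(p-1)/12$, and it is Eisenstein: the Hecke operators $T_r$ act on it as $1+r$. Let $\phi: J_0(p)\to E$ be the optimal quotient of degree $\deg\phi$. The induced map on component groups, together with its dual $E\to J_0(p)$ (a composite which is multiplication by $\deg\phi$ on $E$), lets us compare $\Phi_E$ with the image of $\Phi_J$. The key point is that the functoriality of Grothendieck's monodromy pairing gives: if $\ell\mid \Phi_E$ and $\ell\nmid\deg\phi$, then the $\ell$-part of $\Phi_E$ injects Hecke-equivariantly into (a subquotient of) $\Phi_J$.

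The second step uses that injection. The Hecke algebra acts on $\Phi_E$ through $T_r\mapsto a_r(E)$, while on $\Phi_J$ it acts through $1+r$. The injection therefore forces $a_r(E)\equiv 1+r \pmod{\ell}$ for all $r\neq p$. In other words, the maximal ideal of $\mathbb{T}$ cut out by $f_E$ modulo a prime above $\ell$ is Eisenstein. This would give $E[\ell]^{ss}\cong \mathbb{Z}/\ell\oplus\mu_\ell$.

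The last step is to rule this out. Here I would invoke Mazur's result that the Eisenstein maximal ideal at $\ell$ meets the new quotient $E$ only if $\ell$ divides the congruence module of $f_E$. For an optimal quotient, that congruence module divides $\deg\phi$ (Ribet, Zagier). This contradicts $\ell\nmid\deg\phi$. The hypothesis $\ell>5$ (indeed $\ell\geq5$ suffices) guarantees $\ell\nmid 12$, so $\ell\mid n$ makes sense and the Eisenstein ideal has residue characteristic $\ell$ with the usual multiplicity-one properties.

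I expect the main obstacle to be the second step: making the comparison of component groups precise, that is, proving that $\ell\nmid\deg\phi$ really gives a Hecke-equivariant injection $\Phi_E[\ell^\infty]\hookrightarrow\Phi_J$. I would handle it with Grothendieck's pairing on character groups of tori, following de Shalit \cite[\S 6.3]{deShalit}. This is the argument that the cited result encapsulates.
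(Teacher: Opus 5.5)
The paper gives no proof of its own here and only cites de Shalit, so I judge your argument on its own terms.

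Your first two steps are sound. The first is easier than you expect: by N\'eron functoriality, $\phi^\vee$ and $\phi$ induce maps $\Phi_E\to\Phi_J\to\Phi_E$ whose composite is multiplication by $\deg\phi$. So $\Phi_E[\ell^\infty]$ embeds Hecke-equivariantly into $\Phi_J\cong\mathbb{Z}/n$, and no monodromy pairing is needed. You then correctly get $a_r(E)\equiv 1+r \pmod{\ell}$ for $r\neq p$, hence $E[\ell]^{ss}\cong\mathbb{Z}/\ell\oplus\mu_\ell$.

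\textbf{The gap is the last step.} It is false that an Eisenstein congruence for $f_E$ forces $\ell$ to divide the congruence module of $f_E$. Take $E=X_0(11)$ (the curve 11.a2 in Section~\ref{s: j-invariant}), with $p=11$ and $\ell=5$:
\begin{itemize}
\item the Hecke algebra of level $11$ is $\mathbb{Z}$, and $f_E$ is Eisenstein modulo $5$;
\item there is no other cusp form for $f_E$ to be congruent to, and the congruence number and the modular degree are both $1$;
\item yet $\mathrm{ord}_{11}(q_E)=5$.
\end{itemize}
Your argument uses $\ell>5$ only to get $\ell\nmid 12$, and you say $\ell\geq 5$ suffices. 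So it would prove this false statement, which means an essential input is missing. Separately, the divisibility you quote is backwards: $\deg\phi$ divides the congruence number. The direction you would need is Ribet's equality of $\ell$-parts when $\ell^2\nmid N$.

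\textbf{What is missing} is an argument that also covers the case where $f_E$ is alone in its $\ell$-Eisenstein component. There the obstruction is rational torsion, not a congruence between cusp forms. The simplest fix handles all cases at once:
\begin{itemize}
\item Since $E[\ell]^{ss}\cong\mathbb{Z}/\ell\oplus\mu_\ell$, the representation $E[\ell]$ is reducible, and a Galois-stable line in it is either $\mathbb{Z}/\ell$ or $\mu_\ell$.
\item Hence $E$ or $E/\mu_\ell$ has a rational point of order $\ell$, and $E/\mu_\ell$ still has conductor $p$.
\item Mazur's results on torsion of prime-conductor curves, together with Miyawaki's (see Remark~\ref{prop_denominadores}), exclude this for $\ell\geq 7$.
\end{itemize}
This is exactly where $\ell>5$ is used, and exactly what fails for $(p,\ell)=(11,5)$. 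With this ending your proof goes through, and you need neither the congruence module nor Ribet's theorem. The hypothesis $\ell\nmid\deg\phi$ is then used only in your first step.
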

Thus, under the assumptions of Proposition \ref{result_deShalit}, we can conclude that the refined $\cL$-invariant is well-defined with the hypothesis of Proposition \ref{result_deShalit}. We used SageMath and the Cremona Database to study this fact numerically in the general setting; that is, we checked the following statement.
\begin{expectation}\label{expectation} If $E$ has split multiplicative reduction at $p$, $\ell > 5$ is a prime number such that $\ell \mid p-1$ and is coprime to the modular degree of $E$, then $(\mathrm{ord}_p(q_E), \ell) = 1$.
\end{expectation}

 When $E$ has split multiplicative reduction at a prime $p$ it is known that $-\mathrm{ord}_p(j_E) = \mathrm{ord}_p(q_E)$. Thus we could consider the following, more general statement: \emph{if $E/\bQ$ is a rational elliptic curve, $\ell\mid p-1$ be a prime number with $\ell > 5$ and $\ell$ is coprime to the modular degree, then $(\mathrm{ord}_p(j_E), \ell) = 1$.}
Note that in this statement we could allow $E$ to have any type of reduction at $p$. We used SageMath to numerically check this fact for all the elliptic curves in the Cremona database. We did not find any counter-example. However, we found that the assumption that $\ell > 5$ and the assumption that $\ell$ are coprime to the modular degree do appear to be necessary. We summarize our findings:

\begin{itemize}
	\item The condition that $\ell \ne 5$ is necessary. The elliptic curve $11.a2$ has modular degree $1$ and split multiplicative reduction at $11$. So, if $\ell = 5$ then $\ell$ is coprime to the modular degree and $(\mathrm{ord}_p(q_E), \ell) = 5$. However, this is the only counter-example that we found  when $\ell= 5$.
	\item The condition that $\ell$ is coprime to the modular degree is necessary. Because, if we remove this condition we find multiple counter-examples of Expectation \ref{expectation} in the Cremona database. Some counter-examples are the following\footnote{We will use the L-functions and modular forms database (LMFDB) labelling convention}:
	\begin{itemize}
		\item Consider the elliptic curve $1102.d1$, which has split multiplicative reduction at $29$. Consider $\ell = 7 \mid 29 - 1$. We have that $\mathrm{ord}_{29}(j_E) = -7$ which is not coprime to $\ell$. Remark that in this case the modular degree is $13440= 2^7\cdot 3\cdot 5\cdot 7$.
	 	\item Consider the elliptic curve $111826.a1$, which has split multiplicative reduction at $23$. Consider $\ell = 11 \mid 23 - 1$. We have that $\mathrm{ord}_{23}(j_E) = -11$ which is not coprime to $\ell$. Remark that in this case the modular degree is $25625600= 2^{10}\cdot 5^2 \cdot 7 \cdot 11\cdot 13$.
		\item Consider the elliptic curve $137270.o1$, which has split multiplicative reduction at $53$. Consider $\ell = 13 \mid 53 - 1$. We have that $\mathrm{ord}_{53}(j_E) = -13$ which is not coprime to $\ell$. Remark that in this case the modular degree is $259144704= 2^{10}\cdot 3^{3}\cdot 7\cdot 13\cdot 103 $.
	\end{itemize}
\end{itemize}
The code used to check numerically Expectation \ref{expectation} above and an explanation of it can be found in Github using the following code: \url{https://github.com/JpLlerena/On_Periods_Of_Elliptic_Curves}
\bibliographystyle{alpha}
\bibliography{biblio}

\end{document}